\documentclass[]{amsart}

\usepackage{amsmath}
\usepackage{amsthm}
\usepackage{amsfonts}
\usepackage{amssymb}
\usepackage{enumitem}
\usepackage{todonotes}
\usepackage{mathtools}
\usepackage{mathrsfs,enumitem,upgreek,mathtools}

\usepackage{hyperref}
\hypersetup{colorlinks=true}

\newcommand{\CH}{\mathsf{CH}}		
\newcommand{\forcing}[1]{\mathbf{ #1}} 	
\newcommand{\FORALL}[1]{\forall {#1} \, }
\newcommand{\EXISTS}[1]{\exists {#1} \, }
\newcommand{\Q}{\mathbb{Q}}	
\newcommand{\R}{\mathbb{R}}	
\newcommand{\ZF}{\mathsf{ZF}} 		
\newcommand{\ZFC}{\mathsf{ZFC}}		
\newcommand{\Pow}{\mathscr{P}}		
\newcommand{\IMPLIES}{\Rightarrow }
\newcommand{\IFF}{\Leftrightarrow}

\newcommand{\markdef}[1]{\emph{#1}} 
\newcommand{\Mid}{\boldsymbol\mid}			
\newcommand{\setof}[2]{\mathopen \{{#1}\Mid{#2} \mathclose\}} 
\newcommand{\setofLR}[2]{\left \{{#1} \Mid {#2} \right\}} 
\newcommand{\set}[1]{\mathopen \{ {#1} \mathclose \}} 
\newcommand{\seqof}[2]{\mathopen \langle #1 \Mid #2 \mathclose \rangle} 
\newcommand{\Vv}{\mathord{\mathrm{V}}}		
\newcommand{\Ll}{\mathord{\mathrm{L}}}		
\newcommand{\Hh}{\mathord{\mathrm{H}}}		

\newcommand{\bDelta}{\boldsymbol{\Delta}}
\newcommand{\bGamma}{\boldsymbol{\Gamma}}

\DeclareMathOperator{\TC}{TC} 
\newcommand{\Models}{\vDash}
\newcommand{\card}[1]{\mathopen{ |} #1 \mathclose{ |}}
\newcommand{\leqc}{\leq_{\mathrm{c}}}
\newcommand{\eqc}{=_{\mathrm{c}}}
\newcommand{\eq}[1]{\boldsymbol{[} #1 \boldsymbol{]}}
\newcommand{\constructibledegrees }{ \boldsymbol{\mathscr{D}}_{\mathrm{c}}}
\renewcommand{\restriction}{\mathop{\upharpoonright}}

\theoremstyle{plain}
\newtheorem{theorem}{Theorem}[]

\newtheorem{proposition}[theorem]{Proposition}
\newtheorem{lemma}[theorem]{Lemma}
\newtheorem{corollary}[theorem]{Corollary}

\theoremstyle{definition}
\newtheorem{definition}[theorem]{Definition}

\newtheorem{question}[theorem]{Question}
\newtheorem*{question*}{Question}


\theoremstyle{remark}
\newtheorem{remark}[theorem]{Remark}

\newtheorem*{remarks*}{Remarks}

\usepackage[authormarkup=none, authormarkuptext=name, xcolor=dvipsnames]{changes}
\definechangesauthor[name={Alessandro}, color={blue}]{0}
\definechangesauthor[name={Lorenzo}, color={blue}]{1}

\usepackage[style=alphabetic,sorting=nyt,backend=bibtex8]{biblatex}
\bibliography{DefSierp.bib}

\usepackage{hyperref}

\begin{document}

\title[Constructibility degrees and Sierpi\'{n}ski's coverings]{The breadth of constructibility degrees and definable Sierpi\'{n}ski's coverings}
\author{Alessandro Andretta}
\address{Università degli Studi di Torino, Dipartimento di Matematica ``G. Peano", Via Carlo Alberto 10, 10123 Torino, Italy}
\curraddr{}
\email{alessandro.andretta@unito.it}
\author{Lorenzo Notaro}
\address{University of Vienna, Institute of Mathematics, Kurt G\"{o}del Research Center, Kolingasse 14-16, 1090 Vienna, Austria}
\curraddr{}
\email{lorenzo.notaro@univie.ac.at}

\thanks{This research was supported by the project PRIN 2022 “Models, sets and classifications”, prot. 2022TECZJA. 
The second author would also like to acknowledge INdAM for the financial support. The research of the second author was funded in whole or in part by the Austrian Science Fund (FWF) \href{https://www.fwf.ac.at/en/research-radar/10.55776/ESP1829225}{10.55776/ESP1829225}. For open access purposes, the authors have applied a CC BY public copyright license to any author accepted manuscript version arising from this submission.
We thank F.~Wehrung for the bibliographic help regarding the concept of breadth for upper semi-lattices, and the anonymous Referee for valuable suggestions.}

\subjclass[2020]{Primary 03E15, Secondary 03E45}
\keywords{Sierpi\'{n}ski's coverings, breadth of lattices, constructibility degrees}

\begin{abstract}
Generalizing a result of Törnquist and Weiss, we study the connection between the existence of \( \varSigma_2^1 \) Sierpi\'{n}ski's coverings of \( \R^n \), and a cardinal invariant of the upper semi-lattice of constructibility degrees known as breadth. 
\end{abstract}
\maketitle
\section{Introduction}

In 1919 Sierpi\'{n}ski proved that \( \CH \), the Continuum Hypothesis, is equivalent to the existence of two sets \( A_0 , A_1 \) covering \( \R^2 \) and such that every line parallel to the \( x \)-axis intersects \( A_0 \) in a countable set, and every line parallel to the \( y \)-axis intersects \( A_1 \) in a countable set.
Three decades later, in 1951, Sierpi\'{n}ski obtained another geometric statement equivalent to \( \CH \): \( \R^3 \) is the union of three sets \( A_0 , A_1 , A_2 \) such that every line parallel to \( \mathbf{e}_i \) has finite intersection with \( A_i \), where \( ( \mathbf{e}_0 , \mathbf{e}_1 , \mathbf{e}_2 ) \) is the canonical basis for \( \R^3 \).
These results were generalized to higher dimensions by Kuratowski and Sierpi\'{n}ski (see~\cite{Simms:1991jk} for a detailed account of the history of these results): for all \( n , k \in \omega \)
\begin{multline}\label{eq:Sierpinskigeneral}
2^{\aleph_0} \leq \aleph_{n + k} \IFF \exists A_0 , \dots , A_{ n + 1 } \bigl ( \R^{ n + 2 } = \bigcup_{ i \leq n + 1 } A_i 
\\
 \text{ and } \FORALL{ i \leq n + 1 } \FORALL{ \ell \in \mathcal{L}_i ( \R^{n + 2} ) } ( \card{ A_i \cap \ell } < \aleph_k ) \bigr ) .
\end{multline}
Here and below \( \mathcal{L}_i ( \R^{n} ) \) is the subset of all lines in \( \R^{n} \) that are parallel to the vector \( \mathbf{e}_i \), the \( i \)-th vector of the canonical basis for \( \R^{n} \).
Setting \( n = 0 \) and \( k = 1 \), or \( n = 1 \) and \( k = 0 \) in~\eqref{eq:Sierpinskigeneral} the two theorems of Sierpi\'{n}ski from 1919 and 1951 are obtained.

Several other geometric statements yield a bound on the size of the continuum.
For example \( 2^{\aleph_0} \leq \aleph_{n + k} \) is equivalent to: ``the number of \( \aleph_k \)-fogs it takes the cover the plane is at most \( n + 2 \)”, and similarly for \( \aleph_k \)-clouds and \( \aleph_k \)-sprays.
(See Section~\ref{subsec:coveringtheplanewithsprays} for the definitions of fog, cloud, and spray.)

The \( A_i \)s in~\eqref{eq:Sierpinskigeneral} are constructed using a transfinite induction of length \( \card{ \R } \), so their descriptive complexity is bounded by the complexity of the well-ordering of \( \R \). 
On the other hand, some of the \( A_i \)s are neither measurable nor have the property of Baire (Lemma~\ref{lem:noBaire}), and therefore they can't be \( \boldsymbol{\Delta}^{1}_{2} \) (provably in \( \ZFC \)). 
In Gödel's constructible universe \( \Ll \), there is a good \( \varSigma^1_2 \) well-ordering of \( \R \), and Törnquist and Weiss in~\cite{Tornquist:2015ys} proved that \( \R \subseteq \Ll \) is equivalent to either one of the following statements:
\begin{gather*}
 \exists A_0 , A_1 \in \varSigma^1_2 \bigl ( A_0 \cup A_1 = \R^2 \text{ and } \forall i < 2 \forall \ell \in \mathcal{L}_i ( \R^2 ) ( \card{ \ell \cap A_i } < \aleph_1 ) \bigr ) 
\\
 \exists A_0 , A_1 , A_2 \in \varSigma^1_2 \bigl ( A_0 \cup A_1 \cup A_2 = \R^3 \text{ and } \forall i < 3 \forall \ell \in \mathcal{L}_i ( \R^3 ) ( \card{ \ell \cap A_i } < \aleph_0 ) \bigr ) .
\end{gather*}

In other words, Törnquist and Weiss showed that by asking the \( A_i \)s to be definable in the best possible way (i.e. \( \varSigma_2^1 \)) in both Sierpi\'{n}ski's 1919 and 1951 results, we get the ``strongest" version of \( \CH \) (i.e. \( \R \subseteq \Ll \)).

In this paper, we generalize their result. 
In Section~\ref{sec:preliminaries}, we introduce all notions involved, i.e.~constructibility degrees, the breadth of an upper semi-lattice and Sierpi\'{n}ski's coverings. 
Then, in Section~\ref{sec:realbreadth}, we study the relationship between the breadth of the upper semi-lattice of constructibility degrees and the size of the continuum (Theorem~\ref{th:realbreadth}). 
In Section~\ref{sec:Definablecover} we generalize Törnquist and Weiss' result by showing that the existence of a \( \varSigma_2^1 \) covering as in~\eqref{eq:Sierpinskigeneral}, with \( n + k > 1 \) and \( k \le 1 \), is equivalent to the breadth of the upper semi-lattice of constructibility degrees having a certain finite upper bound. 
Finally, in Section~\ref{sec:conclusion}, we conclude with some open questions.

\subsection*{Notation} Our notation is standard---see e.g.~\cite{Jech:2003pd}. 
When we treat a transitive set \( M \) as a model-theoretic structure, we use the language of set theory, identifying \( M \) with the structure \( \langle M, \in \rangle \).
We write \( M \prec_1 N \) to mean that \( \langle M, \in \rangle \) is a \( \Sigma_1 \)-elementary substructure of \( \langle N, \in \rangle \).

\section{Preliminaries}\label{sec:preliminaries}

\subsection{Sierpi\'{n}ski coverings}
The theorem of Sierpi\'{n}ski-Kuratowski~\eqref{eq:Sierpinskigeneral} deals with sets \( A_0 , \dots , A_{n-1} \) such that \( \bigcup_{i < n} A_i = \R^n \) and such that \( \ell \cap A_i \) is small, for any line \( \ell \) with direction \( \mathbf{e}_i \).
In the applications below, small means being at most countable, or thin (Lemma~\ref{lem:noBaire}).
Recall that a set of reals is \markdef{thin} if it does not contain a non-empty perfect set.

Sets \( A_i \) as in~\eqref{eq:Sierpinskigeneral} form a \markdef{Sierpi\'{n}ski covering} of \( \R^n \).
As mentioned in the introduction, we let
\[
 \mathcal{L}_i ( \R^n ) \coloneqq \setof{ \ell \subseteq \R^n }{ \ell \text{ is a line parallel to } \mathbf{e}_i } .
\]
A line \( \ell \in \mathcal{L}_i ( \R^n ) \) is uniquely determined by a point \( \mathbf{p} \) in 
\[
 H_i = H_i^n \coloneqq \setof{ ( x_0 , \dots , x_{ n - 1 } ) \in \R^n }{ x_i = 0 } 
\]
the coordinate hyperplane orthogonal to \( \mathbf{e}_i \). 
We denote by \( \ell_\mathbf{p} \) the line determined by \( \mathbf{p} \).

\begin{lemma}\label{lem:noBaire}
Let \( n \in \omega \) and suppose that \( \R^n = \bigcup_{i < n } A_i \) and 
\[
\forall i < n \ \bigl ( \setof{ \mathbf{p} \in H_i }{ \ell_\mathbf{p} \cap A_i \text{ is thin} } \text{ is comeager in } H_i \bigr ) .
\]
Then, the \( A_i \)s cannot all have the property of Baire.
\end{lemma}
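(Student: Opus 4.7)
The plan is to argue by contradiction using the Kuratowski--Ulam theorem. Suppose every \( A_i \) has the property of Baire. For each fixed \( i < n \), identify \( \R^n \) with \( H_i \times \R \), where the second factor parametrizes the line \( \ell_\mathbf{p} \) via its \( \mathbf{e}_i \)-coordinate. Since \( A_i \) has the Baire property in this product, Kuratowski--Ulam gives that for comeagerly many \( \mathbf{p} \in H_i \) the slice \( \ell_\mathbf{p} \cap A_i \) has the property of Baire in \( \ell_\mathbf{p} \). Intersecting with the comeager set provided by the hypothesis, for comeagerly many \( \mathbf{p} \in H_i \) the slice \( \ell_\mathbf{p} \cap A_i \) is simultaneously thin and has the property of Baire.

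The key observation is that a thin subset of \( \R \) with the property of Baire must be meager. Indeed, if \( B \subseteq \R \) has the Baire property and is non-meager, then \( B \) is comeager on some non-empty open interval \( U \), so \( B \cap U \) contains a dense \( G_\delta \) subset of \( U \); such a set is a non-empty Polish space without isolated points, and therefore contains a perfect set, contradicting thinness.

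Combining, for each \( i < n \), the slice \( \ell_\mathbf{p} \cap A_i \) is meager in \( \ell_\mathbf{p} \) for comeagerly many \( \mathbf{p} \in H_i \). Applying the reverse direction of Kuratowski--Ulam, each \( A_i \) is meager in \( \R^n \). But then \( \R^n = \bigcup_{i < n} A_i \) would be meager, contradicting the Baire category theorem.

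I do not anticipate any real obstacle: the argument reduces to one application of Kuratowski--Ulam in each direction, combined with the standard dichotomy that a set with the property of Baire is either meager or contains a perfect set. The only minor care is in choosing the right product decomposition of \( \R^n \) for each \( i \), but this is immediate from the definition of \( H_i \).
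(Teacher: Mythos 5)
Your proof is correct and follows essentially the same route as the paper's: one application of Kuratowski--Ulam to get that comeagerly many slices have the Baire property, intersection with the hypothesized comeager set, the observation that a thin set with the Baire property must be meager, and then Kuratowski--Ulam in the reverse direction to conclude each \( A_i \) is meager, contradicting the Baire category theorem. The only difference is that you spell out the justification for the key observation (a non-meager Baire-property set is comeager on some open set and hence contains a dense \( G_\delta \), which is a perfect Polish space), which the paper takes as known.
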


\begin{proof}
Suppose otherwise, and fix \( i < n \).
By Kuratowski-Ulam theorem~\cite[Theorem 8.41]{Kechris:1995kc} the set of all the \( \mathbf{p} \in H_i \) such that \( \ell_\mathbf{p} \cap A_i \) has the property of Baire in \( H_i \), is comeager in \( H_i \). 
Therefore, there is a comeager set (in \( H_i \)) of \( \mathbf{p} \)'s such that \( \ell_\mathbf{P} \cap A_i \) is thin and has the property of Baire (in \( H_i \)). 
However, a set that is thin and has the property of Baire is necessarily meager. 
Therefore, for comeager-many \( \mathbf{p} \)'s, the set \( \ell_\mathbf{p} \cap A_i \) is meager. 
It follows again from Kuratowski-Ulam theorem that the set \( A_i \) is meager in \( \R^n \). 
However, this leads to a contradiction with Baire’s theorem~\cite[Theorem 8.4]{Kechris:1995kc}, as it implies that \( \R^{n} \), being the union of finitely many meager sets, is meager.
\end{proof}

A similar result holds with Lebesgue-measurability in place of the property of Baire.

\subsection{Constructibility degrees}\label{sec:constructibility}
The \markdef{constructibility preorder} \( \leqc \) on \( \Pow ( \omega ) \) is defined by 
\[
 a \leqc b \text{ iff } a \in \Ll [ b ] \text{, which is equivalent to } \Ll [ a ] \subseteq \Ll [ b ] .
\]
Let \( \eqc \) be the induced equivalence relation, that is 
\[ 
 a \eqc b \text{ iff } a \leqc b \leqc a \text{, which is equivalent to } \Ll [ a ] = \Ll [ b ] .
\]
The quotient \( \Pow ( \omega ) / {\eqc} \) with the induced order is the set of \markdef{constructibility degrees}, and it is denoted by \( \constructibledegrees \). 
If \( \mathbf{a} \in \constructibledegrees \), then set \( \Ll [ \mathbf{a} ] \) to be \( \Ll [ a ] \) for some/any \( a \in \mathbf{a} \).
Note that \( \constructibledegrees \) is a partial order with minimum \( \mathbf{0} = {\Pow ( \omega  ) } \cap {\Ll} \), every degree \( \mathbf{a} \) has size \( ( \aleph_1 )^{ \Ll [ \mathbf{a} ] } \le \aleph_1 \), and \( \setof{ \mathbf{b} }{ \mathbf{b} \leqc \mathbf{a} } \), the set of all predecessors of \( \mathbf{a} \in \constructibledegrees \) has size \( \leq ( \aleph_1 )^{ \Ll [ \mathbf{a} ] } \leq \aleph _1 \). 
Our official definition of \( \leqc \) takes place on \( \Pow ( \omega ) \), but since the Cantor space and the Euclidean line \( \R \) are \( \varDelta^1_1 \) isomorphic~\cite[Theorem 3E.7]{MR2526093}, we could have used the true reals without any problem.
The order \( \constructibledegrees \) is an upper semi-lattice as \( \mathbf{a} \vee \mathbf{b} = \eq{a \oplus b}_{\mathrm{c}} \) for some/any \( a \in \mathbf{a} \) and \( b \in \mathbf{b} \), where 
\[ 
a \oplus b = \setof{ 2 n }{ n \in a } \cup \setof{ 2 n + 1 }{ n \in b } . 
\]
More generally, if \( \langle \cdot , \cdot \rangle \colon \omega \times \omega \to \omega \) is a recursive bijection, and \( a_n \subseteq \omega \), then letting
\[
\textstyle\bigoplus_{ n \in \omega } a_n \coloneqq \setof{ \langle n , k \rangle }{ n \in \omega \text{ and } k \in a_n }
\]
we have that \( a_i \leqc \bigoplus_{ n \in \omega } a_n \) for all \( i < \omega \).
Thus the order \( \constructibledegrees \) is \( \sigma \)-bounded---any countable set of elements \( \setof{ \eq{a_n}_{\mathrm{c}} }{ n \in \omega } \) has an upper bound \( \eq{ \bigoplus_{ n \in \omega} a_n }_{\mathrm{c}} \).
On the other hand \( \setof{ \eq{ a_n }_{\mathrm{c}} }{ n \in \omega } \) need not have a least upper bound, i.e. \( \constructibledegrees \) need not be a \( \sigma \)-complete upper semi-lattice~\cite{Truss:1978aa}. 
Again in~\cite{Truss:1978aa}, it is also shown that \( \constructibledegrees \) needs not be a lower semi-lattice.

The structure of \( \constructibledegrees \) is highly non-absolute, being very sensitive to the ambient model. 
If \( \Vv = \Ll \), then \( \constructibledegrees \) is the singleton \( \set{ \mathbf{0} } \), while if \( \Vv = \Ll [ r ] \) where \( r \) is a Cohen real, then \( \constructibledegrees \) has the size of the continuum and a rich structure~\cite{Abraham:1986aa}.

Adamowicz~\cite{Smolska-Adamowicz:1977aa} has shown that for every constructible, constructibly countable and well-founded upper semi-lattice with a lowest element, there is a generic extension of the constructible universe in which \( \constructibledegrees \) is isomorphic to the given upper semi-lattice---see~\cite{Groszek:1988aa,Shore:1992wm} for stronger results and more discussion on this. 

Even though there is not yet a general result \emph{à la} Adamowicz for uncountable upper semi-lattices, there are some scattered, yet interesting, results in this regard: in the iterated perfect set model, also known as iterated Sacks model, the continuum is \( \aleph_2 \) and \( \constructibledegrees \) is well-ordered of order-type \( \omega _2 \)~\cite{Baumgartner:1979vo,Groszek:1981aa}; Groszek~\cite{Groszek:1994aa} has shown that \( \constructibledegrees \) can, consistently with \( \ZFC \), be isomorphic to the reverse copy of \( \omega_1 + 1 \); see also~\cite{Kanovei:1998aa,Kanovei:1999aa} for similar results. 

\subsection{Breadth of an upper semi-lattice}\label{sec:breadth}

Let \( ( P, \preceq ) \) be a preorder and \( F\subseteq P \), we denote by \( {\downarrow} F \) the set of the predecessors of elements of \( F \), i.e. the set \( \setof{ q \in P }{ q \preceq p \text{ for some } p \in F } \). 
Given a \( p \in P \) we write \( {\downarrow} p \) instead of \( {\downarrow} \set{ p } \). 
Two distinct elements \( p , q \in P \) are incomparable if neither \( x \preceq y \) nor \( y \preceq x \); an antichain of \( P \) is a subset of pairwise incomparable elements.
The next result is folklore.

\begin{proposition}\label{prop:boundonantichains}
Suppose \( ( P , \preceq ) \) is a preorder such that \( \card{ { \downarrow } p } < \aleph_ \alpha \) for all \( p \in P \).
If every antichain has size \( \leq \aleph_ \beta \), then \( \card{P} \leq \max (  \aleph_ \alpha ,  \aleph_ \beta ) \). 
\end{proposition}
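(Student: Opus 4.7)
The plan is to reduce to the partial-order case by quotienting, and then stratify the quotient by an ordinal rank whose levels are antichains.

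First, form the quotient $\bar P := P/{\equiv}$ with $p \equiv q \iff p \preceq q \preceq p$, endowed with the induced partial order. The hypotheses transfer: every downset in $\bar P$ has size $< \aleph_\alpha$, and antichains in $\bar P$ correspond to antichains in $P$, so still have size $\leq \aleph_\beta$. Moreover, each $\equiv$-class $[p]$ lies in ${\downarrow} p$, hence has size $< \aleph_\alpha$; so $\card{P} \leq \card{\bar P} \cdot \aleph_\alpha$, and it suffices to bound $\card{\bar P}$.

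Next, define the ordinal rank $\rho \colon \bar P \to \mathrm{Ord}$ by the transfinite recursion
\[
\rho(\bar p) := \sup \setof{\rho(\bar q) + 1}{\bar q \prec \bar p}.
\]
I would prove by induction that $\card{\rho(\bar p)} \leq \card{{\downarrow} \bar p}$ as cardinals, using that the supremum is taken over fewer than $\aleph_\alpha$ ordinals each of cardinality $\leq \card{{\downarrow} \bar p}$; in particular, $\rho(\bar p) < \aleph_\alpha$ as an ordinal. Each level $L_\gamma := \rho^{-1}(\gamma)$ is an antichain—strict comparability $\bar p \prec \bar q$ forces $\rho(\bar p) < \rho(\bar q)$—so $\card{L_\gamma} \leq \aleph_\beta$. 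Summing,
\[
\card{\bar P} \leq \sum_{\gamma < \aleph_\alpha} \card{L_\gamma} \leq \aleph_\alpha \cdot \aleph_\beta = \max(\aleph_\alpha, \aleph_\beta),
\]
and combining with the first step gives $\card{P} \leq \max(\aleph_\alpha, \aleph_\beta)$.

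The main obstacle is that the rank function is only well-defined when $\bar P$ is well-founded, which the hypothesis does not guarantee: for $\aleph_\alpha > \aleph_0$, a single downset can accommodate an infinite descending chain (think of $\bar P$ containing a copy of the reverse ordering of $\omega$). When well-foundedness fails, I would replace the rank argument with the observation that every chain in $\bar P$ has order type $\leq \aleph_\alpha$, since its proper initial segments lie inside a single downset; an infinite Mirsky-type decomposition then partitions $\bar P$ into at most $\aleph_\alpha$ antichains and recovers the bound $\card{\bar P} \leq \aleph_\alpha \cdot \aleph_\beta$.
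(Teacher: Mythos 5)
Your first step (quotient to a poset) is fine but unnecessary---the paper works directly with the preorder---and your rank argument is valid whenever $\bar P$ happens to be well-founded, giving the same bound. You correctly identify that well-foundedness is not a hypothesis, but the proposed repair does not work. First, ``order type'' is not defined for a chain that is not well-ordered (e.g.\ a chain isomorphic to $\Q$); what the downset bound gives you is that every chain has \emph{cardinality} $< \aleph_\alpha$, which is weaker. Second, and more importantly, the ``infinite Mirsky-type decomposition'' you invoke is precisely the height-function decomposition, and defining the height of an element requires well-foundedness---the very thing you are trying to avoid. You have not produced any mechanism for partitioning (or even covering) a non-well-founded $\bar P$ by $\aleph_\alpha$-many antichains without first knowing the cardinality bound you are trying to prove, so the fallback is circular.

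The paper's argument sidesteps ranks entirely. It builds, by transfinite recursion of length $\aleph_\alpha$, a sequence of sets $I_\nu$ where $I_\nu$ is a \emph{maximal} antichain of $P \setminus \bigcup_{\xi<\nu}{\downarrow}I_\xi$ (no well-foundedness needed: maximal antichains exist in any poset by Zorn). If some $\bar p$ were left uncovered after $\aleph_\alpha$ steps, then every $I_\nu$ would be nonempty and, by maximality, would contain some $q_\nu$ comparable with $\bar p$; since $\bar p \notin {\downarrow}I_\nu$, necessarily $q_\nu \preceq \bar p$. The $I_\nu$ are pairwise disjoint, so this yields $\aleph_\alpha$ distinct elements in ${\downarrow}\bar p$, contradicting the hypothesis. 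Thus $P = \bigcup_{\nu<\aleph_\alpha}{\downarrow}I_\nu$, and counting gives $\card{P}\le \aleph_\alpha\cdot(\aleph_\beta\cdot\aleph_\alpha)=\max(\aleph_\alpha,\aleph_\beta)$. This greedy construction is what your fallback would need to become; as written, the gap in the non-well-founded case is genuine.
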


\begin{proof}
Construct by induction \( I_ \nu \subseteq P \) for \( \nu < \aleph_ \alpha \) so that \( I_ \nu \) is a maximal antichain of \( P \setminus \bigcup_{ \xi  < \nu } {\downarrow} I_ \xi \).
(Some of the \( I_ \nu \)s maybe empty, and if \( I_\nu = \emptyset \) then \( I_ \xi = \emptyset \) for all \( \nu < \xi < \aleph_ \alpha  \).)
We argue that \( P = \bigcup_{ \nu < \aleph_ \alpha } {\downarrow} I_ \nu \), so that \( \card{P} \leq \sum_{ \nu < \aleph_ \alpha } \card{ {\downarrow} I_ \nu } \leq \aleph_ \beta  \cdot \aleph_ \alpha \).
Towards a contradiction, suppose there is \( \bar{p} \in P \setminus \bigcup_{ \nu < \aleph_ \alpha } { \downarrow } I_ \nu \).
Then \( I_ \nu \neq \emptyset \) for each \( \nu < \aleph_ \alpha \), so by maximality there is \( q_ \nu \in I_ \nu \) comparable with \( \bar{p} \); as \( \bar{p} \notin { \downarrow } I_ \nu \), then \( q_ \nu \preceq \bar{ p } \).
As the \( I_ \nu \)s are disjoint, then the \( q_ \nu \)s are distinct, and they all belong to \( { \downarrow } p \), against our assumption on \( P \).
\end{proof}

If \( ( P , \preceq ) \) is an upper semi-lattice and \( F \) is a finite subset of \( P \), we denote by \( \bigvee F \) the least upper bound of \( F \). 
All upper semi-lattices considered here have a least element \( \mathbf{0} \), so \( \bigvee \emptyset = \mathbf{0} \).
A subset \( I \subseteq P \) is called an \markdef{ideal} if it is a downward-closed (i.e. \( {\downarrow} I = I \) ) sub-semi-lattice of \( P \); it is proper if \( I \neq P \).

\begin{definition}[Ditor, {\cite[\S 4]{Ditor:1984aa}}]
Given an upper semi-lattice \( ( P , \preceq ) \), a finite subset \( F \subseteq P \) is \markdef{redundant} if there is a \( p \in F \) such that \( p \preceq \bigvee F \setminus \set{ p } \), otherwise we call it \markdef{irredundant}.

The \markdef{breadth} of \( P \) is the supremum of the cardinalities of irredundant subsets of \( P \).
If \( p \in P \), we say that \( P \) \markdef{has breadth at most \( \kappa \) above \( p \)} if the sub-semi-lattice \( \setof{ q \in P }{ p \preceq q } \) has breadth at most \( \kappa \).
\end{definition}

Observe that if \( P \) has breadth \( n \), then \( n = 0  \) if and only if  \( P  \) is a singleton \( \{ \mathbf{0} \} \), and \( n = 1 \) if and only if  \( P \) is a linearly ordered set with at least two elements, and hence \( n \geq 2 \) if and only if \( P \) has antichains of size at least \( 2 \).

The notion of breadth yields much sharper bounds than the size of the antichains.

\begin{theorem}[{\cite{Ditor:1984aa}}]\label{th:Ditor}
If \( P \) is an upper semi-lattice of breadth \( n > 0 \) and such that \( \card{ {\downarrow} p } < \aleph_{\alpha} \) for all \( p \in P \), then 
\begin{enumerate}[label={\upshape (\alph*)}]
\item\label{th:Ditor-1}
\( \card{ P } \le \aleph_{ \alpha + n - 1 } \), and
\item \label{th:Ditor-2}
\( \card{ I } < \aleph_{ \alpha + n - 1 } \) for every proper ideal \( I \) of \( P \).
\end{enumerate}
\end{theorem}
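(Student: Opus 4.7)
The plan is to induct on $n \ge 1$, proving (a) and (b) simultaneously. For the base case $n = 1$, the semi-lattice $P$ is linearly ordered. To establish (a) I would assume $|P| > \aleph_\alpha$ and build by transfinite recursion a strictly increasing sequence $\langle p_\xi : \xi \le \aleph_\alpha \rangle$ in $P$: at stage $\xi$ the union $\bigcup_{\eta < \xi} {\downarrow}p_\eta$ has cardinality at most $\aleph_\alpha < |P|$, so some $p_\xi$ lies outside it, and linearity forces $p_\xi > p_\eta$ for every $\eta < \xi$. Then ${\downarrow}p_{\aleph_\alpha}$ contains the preceding $\aleph_\alpha$ terms, contradicting the hypothesis. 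For (b) at $n = 1$, any $q \in P \setminus I$ is by linearity an upper bound for every element of $I$, so $I \subseteq {\downarrow}q$ has size $< \aleph_\alpha$.

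For the inductive step, assume the theorem for breadth $n - 1$ with the same $\alpha$. I would first prove (b). Let $I$ be a proper ideal, pick $q \in P \setminus I$, and define $\phi \colon I \to {\uparrow}q$ by $\phi(p) = p \vee q$. Since $I$ is join-closed, $\phi(I)$ is a sub-semi-lattice of ${\uparrow}q$ with least element $q$; each $r \in \phi(I)$ satisfies $|{\downarrow}r \cap \phi(I)| \le |{\downarrow}r| < \aleph_\alpha$; and each fibre $\phi^{-1}(r) \subseteq {\downarrow}r$ has size $< \aleph_\alpha$. The crucial claim is that $\phi(I)$ has breadth at most $n - 1$: if some $\{\phi(p_1), \ldots, \phi(p_n)\}$ were irredundant in $\phi(I)$, the set $\{p_1, \ldots, p_n, q\}$ would have $n + 1$ distinct elements in $P$ and so must be redundant. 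The case $q \le \bigvee_i p_i$ is ruled out because $\bigvee_i p_i \in I$ would force $q \in I$; in any remaining case one has $p_j \le q \vee \bigvee_{i \ne j} p_i$ for some $j$, whence $\phi(p_j) \le \bigvee_{i \ne j} \phi(p_i)$, contradicting irredundancy in $\phi(I)$. The inductive hypothesis then gives $|\phi(I)| \le \aleph_{\alpha + n - 2}$, and summing over fibres yields $|I| \le \aleph_{\alpha + n - 2} \cdot \aleph_\alpha = \aleph_{\alpha + n - 2} < \aleph_{\alpha + n - 1}$.

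To derive (a) from (b) at the same stage, I would argue by contradiction: assume $|P| > \aleph_{\alpha + n - 1}$, fix $S \subseteq P$ with $|S| = \aleph_{\alpha + n - 1}$, and let $J$ be the ideal generated by $S$, namely $J = \{r \in P : r \le \bigvee F \text{ for some finite } F \subseteq S\}$. Since the collection of finite subsets of $S$ has cardinality $\aleph_{\alpha + n - 1}$ and each ${\downarrow}(\bigvee F)$ has size $< \aleph_\alpha$, one gets $|J| \le \aleph_{\alpha + n - 1} \cdot \aleph_\alpha = \aleph_{\alpha + n - 1}$. If $J = P$ then already $|P| \le \aleph_{\alpha + n - 1}$, a contradiction; otherwise $J$ is a proper ideal and (b) forces $|J| < \aleph_{\alpha + n - 1}$, contradicting $S \subseteq J$.

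The main obstacle is isolating the breadth-reducing map in the inductive step: the choice $\phi(p) = p \vee q$ drops the breadth by exactly one precisely because the one ``bad'' redundancy of $\{p_1, \ldots, p_n, q\}$---the one in which $q$ is absorbed into the join of the $p_i$'s---is ruled out by the defining property $q \notin I$ of the extracted witness; once this is in place, both (a) and (b) reduce to routine cardinal bookkeeping.
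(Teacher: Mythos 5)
The paper states Theorem~\ref{th:Ditor} with a citation to Ditor's 1984 paper but does not reproduce its proof, so there is no in-text argument to compare against. Your proof is correct and is, to the best of my knowledge, essentially Ditor's original argument: the induction on breadth, the chain-building contradiction in the base case, the breadth-reducing map $\phi(p) = p \vee q$ into ${\uparrow}q$ with the case split ruling out $q \le \bigvee_i p_i$ because $q \notin I$, and the derivation of (a) from (b) via the generated ideal are all the standard steps. Two minor points of precision you may wish to add: the inductive hypothesis should be taken in its strong form (all breadths $1 \le m \le n-1$), since $\phi(I)$ may have breadth strictly less than $n-1$, and the degenerate case in which $\phi(I)$ has breadth $0$ (so $\phi(I) = \{q\}$) should be noted as trivially satisfying the bound; neither affects the soundness of the argument.
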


Ditor also showed that this cardinal bound is sharp when \( n = 2 \) and \( \aleph_\alpha \) is regular.

\begin{theorem}[{\cite{Ditor:1984aa}}]
For every regular \( \aleph_{ \alpha } \), there exists an upper semi-lattice \( P \) of breadth \( 2 \) such that \( \card{ P } = \aleph_{ \alpha + 1 } \) and \( \card{ {\downarrow} p } < \aleph_{ \alpha } \) for every \( p \in P \).
\end{theorem}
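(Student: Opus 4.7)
Set $\kappa=\aleph_\alpha$. The plan is to construct $P$ by a transfinite recursion of length $\kappa^+$. I build an increasing chain $(Q_\xi)_{\xi<\kappa^+}$ of sub-upper-semi-lattices $Q_\xi$, each of breadth $\le 2$, each of cardinality $<\kappa$, and each downward-closed (hence an ideal) inside every later $Q_\eta$; the final semi-lattice is $P=\bigcup_{\xi<\kappa^+}Q_\xi$. I start with $Q_0=\{\mathbf{0}\}$, at limit stages take unions, and at a successor stage $\xi+1$ I add one new generator $g_\xi$ (incomparable with all previous generators) together with the joins $g_\xi\vee q$ for $q\in Q_\xi$, the latter defined according to a prescribed combinatorial scheme whose purpose is to force breadth $2$.

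A natural first guess would be to take $P=I(L)$, the interval semi-lattice of a linear order $L$ of cardinality $\kappa^+$: elements are intervals $[a,b]$ with $a\le b$ in $L$, ordered by inclusion, with join $[a,b]\vee[c,d]=[\min(a,c),\max(b,d)]$. Any $I(L)$ has breadth $2$ by the standard ``middle element'' argument: among three intervals, the one whose left endpoint is not the overall minimum and whose right endpoint is not the overall maximum is contained in the join of the other two. Unfortunately no linear order $L$ of size $\kappa^+$ has all intervals of size $<\kappa$: a short cofinality argument shows that such an $L$ would have cofinality $\kappa^+$, but any cofinal chain of length $\kappa^+$ then contains a sub-chain of length $\kappa$, yielding an interval of size $\ge\kappa$. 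So $I(L)$ is not directly available, and we must simulate its behaviour with a subtler combinatorial skeleton.

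The heart of the argument is the scheme used at successor stages: for every triple $\{g_{\xi_1},g_{\xi_2},g_{\xi_3}\}$ of generators one must canonically decide which of the three pairwise joins absorbs the remaining generator. This amounts to fixing a ternary ``betweenness'' relation on $\kappa^+$ that is total on triples (this is what gives breadth $2$) but does not induce long $\le$-chains (this is what keeps each $|{\downarrow}p|<\kappa$). I expect this to be the hardest step; it can be realised via a coherent family, a walks-on-ordinals construction on $\kappa^+$, or a directly defined combinatorial scheme. Regularity of $\kappa$ is used crucially to guarantee that unions of fewer than $\kappa$ many sets of size $<\kappa$ remain of size $<\kappa$, which keeps $|Q_\xi|<\kappa$ inductively at limit stages of cofinality $<\kappa$.

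Granting the scheme, the three required properties follow routinely. Breadth $2$ passes from each $Q_\xi$ to $P$ because any three elements of $P$ lie in a common $Q_\xi$. The equality $|P|=\kappa^+$ holds because each successor stage contributes at least the new generator $g_\xi$, and $|Q_\xi|<\kappa$ prevents premature stabilisation. Finally, for $p\in P$ pick the least $\xi$ with $p\in Q_\xi$; downward-closedness of $Q_\xi$ in $P$ gives ${\downarrow}p\subseteq Q_\xi$, and hence $|{\downarrow}p|<\kappa$. The main obstacle, as indicated, is producing the identification scheme with the right compatibility so that the recursion actually yields a bona fide upper semi-lattice of breadth $2$.
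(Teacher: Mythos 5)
The paper cites this theorem from Ditor's 1984 paper without reproducing the proof, so there is no in-paper proof to compare against; I can only assess your proposal on its own terms. As written, it has a concrete error and a substantive gap.

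The error is the cardinality invariant on the chain. You require every $Q_\xi$ for $\xi<\kappa^+$ to have size $<\kappa$, yet you add at least one new element at every successor stage, so $|Q_\xi|\ge|\xi|$ and in particular $|Q_\kappa|\ge\kappa$. Regularity of $\kappa$ does not save this: it controls limit stages of cofinality $<\kappa$, but stage $\kappa$ itself has cofinality $\kappa$, and there the union of $\kappa$ many sets of size $<\kappa$ may well have size $\kappa$. Since your whole argument for $|{\downarrow}p|<\kappa$ is to locate $p$ inside an ideal $Q_\xi$ of size $<\kappa$, that argument collapses for every $p$ introduced at or after stage $\kappa$ --- which is all but $<\kappa$ of them. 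A repair would have to control $|{\downarrow}p|$ element by element (e.g. inductively verify that each newly created join $g_\xi\vee q$ has $<\kappa$ predecessors) rather than via the size of a containing ideal; this is a different and more delicate induction than the one you state.

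The gap is that the ``prescribed combinatorial scheme'' is the entire mathematical content and is never given. You correctly isolate what is needed: for each $\xi_1<\xi_2<\xi_3$, decide whether $g_{\xi_1}\le g_{\xi_2}\vee g_{\xi_3}$ or $g_{\xi_2}\le g_{\xi_1}\vee g_{\xi_3}$ (your own downward-closedness requirement already rules out the third alternative $g_{\xi_3}\le g_{\xi_1}\vee g_{\xi_2}$, so the choice is genuinely binary). But then for each pair $\xi_2<\xi_3$ the set $\{\xi_1<\xi_2 : g_{\xi_1}\le g_{\xi_2}\vee g_{\xi_3}\}$ must have size $<\kappa$, and the choices must cohere so that $\vee$ is well defined, associative, and still gives breadth $2$ after all the induced identifications among joins propagate. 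Gesturing at ``a coherent family, walks on ordinals, or a directly defined scheme'' does not discharge this; it is precisely here that Ditor's construction does its work, and without it the recursion is not known to produce a semi-lattice at all, let alone one with the stated properties. The interval-semi-lattice discussion and the observation that it fails are good orientation, but the proposal as it stands is a plan with a broken invariant, not a proof.
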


Then, Wehrung improved the latter result by showing that the cardinal bound of Theorem~\ref{th:Ditor} is sharp also for every \( n > 2 \) and every uncountable regular \( \aleph_{ \alpha } \).

\begin{theorem}[{\cite{Wehrung:2010aa}}]
For every uncountable regular \( \aleph_{ \alpha } \), for every \( n > 0 \), there exists an upper semi-lattice \( P \) of breadth \( n \) such that \( \card{ P } = \aleph_{ \alpha + n - 1 } \) and \( \card{ { \downarrow } p } < \aleph_{ \alpha } \) for every \( p \in P \).
\end{theorem}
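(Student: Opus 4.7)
The plan is to proceed by induction on $n$, with the base case $n = 1$ trivial (any chain of order-type $\aleph_\alpha$ has breadth $1$, cardinality $\aleph_\alpha$, and all proper principal ideals of size less than $\aleph_\alpha$) and the case $n = 2$ supplied by Ditor's preceding theorem. For the inductive step, assume that an upper semi-lattice $Q$ has been constructed with breadth $n$, cardinality $\aleph_{\alpha + n - 1}$, and $\card{{\downarrow}q} < \aleph_\alpha$ for every $q \in Q$; the goal is to construct $P$ of breadth $n+1$, cardinality $\aleph_{\alpha + n}$, still satisfying the same bound on principal ideals.

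The guiding idea is that taking a Cartesian product with a chain $C$ increases breadth by exactly one. Indeed, if $C$ is a chain with at least two elements and $Q$ has breadth $n$, then $Q \times C$ (with componentwise order and join) has breadth $n+1$: given $n+2$ elements $(q_i, c_i)$ witnessing irredundancy, at most one second coordinate $c_j$ can be strictly maximal in $C$, so for the remaining $n+1$ indices one has $q_j \not\le \bigvee_{i \ne j} q_i$; restricting the join to these indices still fails (passing to a smaller join), giving an irredundant $(n+1)$-subset of $Q$ and contradicting the inductive hypothesis. Irredundancy of size $n+1$ is exhibited by pairing an irredundant $n$-subset of $Q$ with the lower element of $C$ and adjoining $(\mathbf{0}_Q, c_1)$ at the higher one.

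Unfortunately, a literal Cartesian product with a chain of size $\aleph_{\alpha + n}$ fails the principal-ideal requirement: no chain of that size can have all principal ideals of size less than $\aleph_\alpha$, since the length of the chain is the supremum of those ideal sizes. The fix is to replace the chain factor by a combinatorial surrogate of breadth $1$ but with small principal ideals—concretely a tree $T$ on $\aleph_{\alpha + n}$ nodes of height less than $\aleph_\alpha$—and to define $P$ as a fibered substructure of $Q \times T$ in which the join of two elements on incomparable branches of $T$ is routed to a carefully chosen ``canonical'' value rather than to a single global top. Each element of $P$ is then determined by its $Q$-coordinate together with a chain in $T$ of length less than $\aleph_\alpha$, so the principal-ideal bound transfers from $Q$ and $T$ to $P$ via the product estimate.

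The main obstacle is designing the collapse rule on incomparable branches of $T$ so that the following three requirements hold simultaneously: the join must be defined and associative everywhere (so that $P$ is indeed an upper semi-lattice); the collapse must not concentrate too many incomparable branches through any single element (otherwise principal ideals inflate past $\aleph_\alpha$); and no spurious irredundant $(n+2)$-subset may be introduced by the new join patterns created through the collapse. Verifying this last point amounts to showing that every $(n+2)$-tuple in $P$ projects via the tree coordinate onto an $(n+2)$-tuple in $Q$ that the pigeonhole/chain argument of the second paragraph forces to be redundant, and this combinatorial balancing is the heart of the construction carried out in~\cite{Wehrung:2010aa}.
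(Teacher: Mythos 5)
The paper does not prove this theorem; it states it as a citation to Wehrung's work, so there is no internal proof to compare your sketch against. I can only assess the sketch on its own terms.

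The parts you carry out are correct. A chain of order type \( \omega_{\alpha} \) (for \( \aleph_{\alpha} \) regular) indeed handles \( n = 1 \), Ditor's preceding theorem handles \( n = 2 \), and your pigeonhole argument that \( Q \times C \) has breadth exactly one more than \( Q \) when \( C \) is a chain is sound (at most one second coordinate can strictly dominate, the remaining \( n + 1 \) first coordinates form an irredundant set in \( Q \) after passing to the smaller join, and distinctness of those \( q_j \) follows from irredundancy). You also correctly diagnose why the literal product cannot work: a chain of size \( \aleph_{\alpha + n} \) cannot have all principal ideals below \( \aleph_{\alpha} \).

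The gap is that the construction you then gesture at — a ``fibered substructure of \( Q \times T \)'' with a ``collapse rule'' on incomparable branches — is precisely where all of the mathematical content lives, and you give only a wish list of properties it must satisfy, not a definition. The tension you name is worse than a balancing act: if you make \( T \) into a join-semilattice in the naive way by adjoining a top \( \top \), then any two incomparable nodes form an irredundant pair joining to \( \top \), so \( T \cup \{\top\} \) already has breadth \( 2 \), and \( Q \times (T \cup \{\top\}) \) has breadth \( n + 2 \), overshooting. To get breadth exactly \( n + 1 \) you must route the joins of incomparable nodes to values that are simultaneously spread out enough that no new irredundant \( (n+2) \)-set arises and concentrated enough that principal ideals stay small and the join is associative; whether any such routing exists \emph{is} the theorem. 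Admitting that this is ``the heart of the construction carried out in~\cite{Wehrung:2010aa}'' concedes the point: what you have is a plausible-looking frame with the picture missing. I would also caution against assuming Wehrung's actual proof proceeds by an induction of this shape at all; the natural dual of Ditor's counting argument is a direct construction calibrated against a set-theoretic free-set principle rather than an iterated product, and an inductive reduction to a single ``breadth-raising'' step has no obvious reason to be available.
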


See  Section~\ref{sec:largesacks} for more discussion on these resuls and related open questions.

\section{The breadth of constructibility degrees}\label{sec:realbreadth}

In this section, we explore the relationship between the breadth of the upper semi-lattice of constructibility degrees and the size of the continuum. 
Consider the following statement:
\begin{equation*}\label{eq:mincont}\tag{\( \star \)}
\forall M \prec_1 \Hh_{\omega_1} \left ( \card{ M } = 2^{ \aleph_0 } \Rightarrow M = \Hh_{\omega_1 } \right ) ,
\end{equation*}
where  \( \Hh_{ \omega_1 } \coloneqq \setof{ x }{ \card{ \TC ( x ) } < \omega _1 } \) is  the  set of all hereditarily countable sets.

The property \eqref{eq:mincont} may be interpreted as a kind of minimality principle: as soon as a \( \Sigma_1 \)-elementary substructure of \( \langle \Hh_{ \omega_1}, \in \rangle \) has size of the continuum, it already coincides with \( \Hh_{ \omega_1 }\). 
This interpretation is supported by the following result.

\begin{proposition}\label{prop:nocohen}
Assume \eqref{eq:mincont}. 
Then, there are no \( \Ll \)-generic Cohen reals.
\end{proposition}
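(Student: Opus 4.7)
The proof will be by contradiction: assume \eqref{eq:mincont} together with the existence of an \(\Ll\)-generic Cohen real \(c\), and produce a \(\Sigma_1\)-elementary substructure of \(\Hh_{\omega_1}\) of cardinality \(2^{\aleph_0}\) which is not all of \(\Hh_{\omega_1}\). The main tool is Shoenfield's \(\bSigma^1_2\)-absoluteness: after coding hereditarily countable sets by reals, a \(\Sigma_1\)-formula over \(\Hh_{\omega_1}\) with real parameters becomes a \(\bSigma^1_2\)-formula, and such formulas transfer between \(\Vv\) and any inner model with the correct \(\omega_1\); consequently, for every set \(A\) (and in particular every real), \(\Ll[A] \cap \Hh_{\omega_1} \prec_1 \Hh_{\omega_1}\).

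First, with \(A = \emptyset\), the substructure \(\Ll \cap \Hh_{\omega_1} = \Ll_{\omega_1^{\Vv}}\) is \(\Sigma_1\)-elementary in \(\Hh_{\omega_1}\); since \(c \notin \Ll\), it is proper. Invoking \eqref{eq:mincont} then yields \(\omega_1^{\Vv} = |\Ll_{\omega_1^{\Vv}}| < 2^{\aleph_0}\); in particular, \(\CH\) fails. This is not yet a contradiction, so I need to enlarge the substructure up to size exactly \(2^{\aleph_0}\) while keeping \(c\) outside it.

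For this, I consider an ideal \(I\) of the upper semi-lattice \(\constructibledegrees\) chosen so that \(\eq{c}_{\mathrm{c}} \notin I\), and form
\[
M := \bigcup_{\eq{r}_{\mathrm{c}} \in I} \Ll[r] \cap \Hh_{\omega_1}.
\]
As \(I\) is upward directed (an ideal is closed under finite joins in \(\constructibledegrees\)), \(M\) is a directed union of \(\Sigma_1\)-elementary substructures, hence itself \(\Sigma_1\)-elementary in \(\Hh_{\omega_1}\). Moreover \(c \notin M\): if \(c \in \Ll[r]\) for some \(\eq{r}_{\mathrm{c}} \in I\), then \(\eq{c}_{\mathrm{c}} \leqc \eq{r}_{\mathrm{c}}\), and downward closure of \(I\) would force \(\eq{c}_{\mathrm{c}} \in I\), against the choice of \(I\).

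The hard part is choosing \(I\) so that \(|M| = 2^{\aleph_0}\). Since \(|\Ll[c] \cap \Hh_{\omega_1}| = \omega_1^{\Vv} < 2^{\aleph_0}\), there are \(2^{\aleph_0}\)-many reals in \(\Vv\) lying outside \(\Ll[c]\); moreover, a single \(\Ll\)-generic Cohen real decomposes into infinitely many mutually \(\Ll\)-generic Cohens each strictly below \(\eq{c}_{\mathrm{c}}\) in \(\constructibledegrees\). Using this richness, one should be able to construct (or take maximal) an ideal \(I\) missing \(\eq{c}_{\mathrm{c}}\) whose associated union \(M\) gathers \(2^{\aleph_0}\)-many hereditarily countable sets. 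The resulting \(M\) would be a proper \(\Sigma_1\)-elementary substructure of \(\Hh_{\omega_1}\) of size \(2^{\aleph_0}\), contradicting \eqref{eq:mincont}; the main technical point is verifying that such an ideal really collects enough reals, which amounts to unfolding the structure of the constructibility degrees below and incomparable with \(\eq{c}_{\mathrm{c}}\) in a Cohen extension.
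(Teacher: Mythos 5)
Your high-level plan is the same as the paper's: exhibit a proper \(\Sigma_1\)-elementary substructure of \(\Hh_{\omega_1}\) of size \(2^{\aleph_0}\), contradicting \eqref{eq:mincont}. However, there are two real problems.

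First, the claim that \(\Ll[r]\cap \Hh_{\omega_1}\prec_1 \Hh_{\omega_1}\) for every real \(r\) is not what L\'evy--Shoenfield gives, and it is false in general. Since \(\omega_1^{\Vv}\) is an \(\Ll[r]\)-cardinal, one has \(\Ll[r]\cap \Hh_{\omega_1}=\Ll_{\omega_1^{\Vv}}[r]\), whereas L\'evy--Shoenfield yields \(\Ll_{\omega_1^{\Ll[r]}}[r]\prec_1\Hh_{\omega_1}\). These agree only when \(\omega_1^{\Ll[r]}=\omega_1^{\Vv}\). If \(\omega_1^{\Ll[r]}<\omega_1^{\Vv}\), pick any \(p\in \Ll_{\omega_1^{\Vv}}[r]\setminus \Ll_{\omega_1^{\Ll[r]}}[r]\): the \(\Sigma_1\)-statement ``\(p\) is countable'' holds in \(\Hh_{\omega_1}\), but no surjection \(\omega\to p\) lies in \(\Ll[r]\) (else \(p\) would be countable there, hence in \(\Hh_{\omega_1}^{\Ll[r]}=\Ll_{\omega_1^{\Ll[r]}}[r]\)); so \(\Ll_{\omega_1^{\Vv}}[r]\not\prec_1\Hh_{\omega_1}\). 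Your directed union of such structures therefore need not be \(\Sigma_1\)-elementary. The paper avoids this by working with \(\Ll_{\omega_1^{\Ll}}[F]\) for finite sets \(F\) of mutually \(\Ll\)-generic Cohen reals, for which \(\omega_1^{\Ll[F]}=\omega_1^{\Ll}\), so L\'evy--Shoenfield applies exactly as stated.

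Second, and more seriously, the step you describe as ``the hard part'' --- producing an ideal \(I\) missing \(\eq{c}_{\mathrm{c}}\) whose associated union has size continuum --- is the actual content of the proof, and you do not construct it. Nothing in general forbids every such \(M\) from being small; you need a concrete mechanism to get \(2^{\aleph_0}\)-many reals while dodging \(c\). The paper's device is to fix a perfect closed set \(\mathcal{C}\subseteq\R\) of pairwise mutually \(\Ll\)-generic Cohen reals (which exists once a single \(\Ll\)-generic Cohen real does), fix \(x\in\mathcal{C}\), and set
\[
M=\bigcup_{F\in[\mathcal{C}\setminus\{x\}]^{<\omega}}\Ll_{\omega_1^{\Ll}}[F].
\]
Mutual genericity gives both \(\omega_1^{\Ll[F]}=\omega_1^{\Ll}\) (so each term is \(\prec_1\Hh_{\omega_1}\)) and \(x\notin M\), while \(|M|=|\mathcal{C}|=2^{\aleph_0}\) is automatic since \(\mathcal{C}\setminus\{x\}\subseteq M\). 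Without this (or an equivalent) construction, your argument does not reach the conclusion.
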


We need the following theorem, known as \emph{Lévy-Shoenfield Absoluteness Theorem}.

\begin{theorem}[Lévy-Shoenfield {\cite[Theorem 36]{Jech:1978aa}}]
For every  \( a \in \R \), if \( \theta = \omega_1^{ \Ll [ a ] } \), then 
\[
\Ll_{ \theta } [ a ] \prec_1 \Hh_{\omega_1}.
\]
\end{theorem}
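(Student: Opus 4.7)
The plan is to suppose for contradiction that there exists an \( \Ll \)-generic Cohen real \( c \), and then construct a proper \( \Sigma_1 \)-elementary substructure of \( \Hh_{\omega_1} \) of cardinality \( 2^{\aleph_0} \), directly contradicting \eqref{eq:mincont}.

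The starting point is to split \( c \) into two ``halves'' \( c_0, c_1 \) via \( c_i(n) := c(2n+i) \). A standard product-forcing argument shows that \( (c_0, c_1) \) is \( \Ll \)-generic for the product of two copies of Cohen forcing; in particular, each \( c_i \) is individually \( \Ll \)-generic Cohen, and \( c \notin \Ll[c_0] \). Applying Lévy--Shoenfield with parameter \( c_0 \), and noting that Cohen forcing is ccc in \( \Ll \) (so \( \omega_1^{\Ll[c_0]} = \omega_1^{\Ll} \)), the structure \( M_0 := \Ll_{\omega_1^{\Ll}}[c_0] \) is a \( \Sigma_1 \)-elementary substructure of \( \Hh_{\omega_1} \) of cardinality \( \omega_1^{\Ll} \), and it is proper since \( c \in \Hh_{\omega_1} \setminus \Ll[c_0] \).

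If \( \omega_1^{\Ll} = 2^{\aleph_0} \), then \( M_0 \) itself already directly contradicts \eqref{eq:mincont} and we are done. Otherwise \( \omega_1^{\Ll} < 2^{\aleph_0} \), and I would enlarge \( M_0 \) to a \( \Sigma_1 \)-elementary substructure of exact cardinality \( 2^{\aleph_0} \) still missing \( c \), by taking a directed union: for every \( \oplus \)-closed \( X \subseteq \R \), the set \( M^X := \bigcup_{a \in X} \Ll_{\omega_1^{\Ll[a]}}[a] \) is \( \Sigma_1 \)-elementary in \( \Hh_{\omega_1} \) (each summand by Lévy--Shoenfield, and the family is \( \subseteq \)-directed since \( X \) is \( \oplus \)-closed). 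The goal is to find such an \( X \) with \( |X| = 2^{\aleph_0} \) satisfying \( c \notin \Ll[a] \) for every \( a \in X \), i.e., with \( X \) avoiding the upper cone of \( \eq{c}_{\mathrm{c}} \) in \( \constructibledegrees \).

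The main obstacle is constructing such an \( X \). The canonical candidate \( \Ll[c_0] \cap \R \) is \( \oplus \)-closed and avoids the upper cone of \( \eq{c}_{\mathrm{c}} \), but has cardinality only \( \omega_1^{\Ll} \); and simple enlargements fail because \( \R \) minus this upper cone is not itself \( \oplus \)-closed (witness \( c_0 \oplus c_1 = c \)). I would carry out a transfinite recursion of length \( 2^{\aleph_0} \): at stage \( \alpha \), adjoin a real \( d_\alpha \in V \) such that \( c \notin \Ll[\{c_0\} \cup \{d_\beta : \beta \le \alpha\}] \), and close under \( \oplus \). The recursion should continue through all stages of size \( < 2^{\aleph_0} \) because \( |\Ll[c] \cap \R| = \omega_1^{\Ll} < 2^{\aleph_0} \) forces \( V \) to contain \( 2^{\aleph_0} \)-many reals outside \( \Ll[c] \), providing ample candidate \( d_\alpha \) at each stage.
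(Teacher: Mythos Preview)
Your proposal does not prove the stated theorem. The statement in question is the L\'evy--Shoenfield absoluteness theorem, which the paper quotes from the literature without proof. Your write-up does not attempt to establish \( \Ll_{\omega_1^{\Ll[a]}}[a] \prec_1 \Hh_{\omega_1} \) at all; rather, it \emph{invokes} L\'evy--Shoenfield repeatedly as a black box in order to prove a different result, namely Proposition~\ref{prop:nocohen} (that \eqref{eq:mincont} rules out \( \Ll \)-generic Cohen reals). So as a proof of the target statement it is simply off-topic.

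If your intended target was Proposition~\ref{prop:nocohen}, then the approach differs from the paper's and contains a genuine gap. The paper exploits the well-known fact that a single Cohen real over \( \Ll \) yields a perfect set \( \mathcal{C} \) of \emph{mutually} generic Cohen reals; fixing \( x \in \mathcal{C} \), the union \( M = \bigcup_{F \in [\mathcal{C}\setminus\{x\}]^{<\omega}} \Ll_{\omega_1^{\Ll}}[F] \) is automatically of size \( 2^{\aleph_0} \), \( \Sigma_1 \)-elementary in \( \Hh_{\omega_1} \), and misses \( x \) by mutual genericity. No recursion is needed. Your recursion, by contrast, is not correctly justified: at stage \( \alpha \) you need \( d_\alpha \) with \( c \notin \Ll[a \oplus d_\alpha] \) for \emph{every} \( a \in X_\alpha \), but your stated reason---that there are \( 2^{\aleph_0} \) reals outside \( \Ll[c] \)---is irrelevant. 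The condition \( d \notin \Ll[c] \) neither implies nor is implied by \( c \notin \Ll[a \oplus d] \); for instance \( c_1 \in \Ll[c] \) yet is exactly the kind of real you must avoid. You give no bound on the ``bad'' set \( \bigcup_{a \in X_\alpha}\{d : c \in \Ll[a \oplus d]\} \), and without one the recursion may stall. The perfect-set-of-generics argument is precisely what supplies continuum-many reals closed under \( \oplus \) below the cone of the distinguished one.
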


\begin{proof}[Proof of Proposition~\ref{prop:nocohen}]
Suppose otherwise towards a contradiction. 
Then, it is well-known that there exists a perfect closed set \( \mathcal{C}\subseteq \mathbb{R} \) such that  any finite \( F \subseteq \mathcal{C} \) is a set of mutually Cohen generic reals over \( \Ll \). 
It follows that \( \omega_1^{ \Ll [ F ] } = \omega_1^{\Ll} \), for any such \( F \). 
Now fix an \( x \in \mathcal{C} \) and consider the transitive set
\[
M = \bigcup_{F \in [ \mathcal{C}\setminus \set{x} ]^{<\omega}} \Ll_{\omega_1^{\Ll}}[F].
\]
By Lévy-Shoenfield absoluteness theorem, \( \Ll_{\omega_1^{\Ll}}[F] = \Ll_{\omega_1^{\Ll [ F ] }} [ F ] \prec_1 \Hh_{\omega_1} \) for every \( F \in [ \mathcal{C} ]^{ < \omega } \). 
Therefore, \( M \prec_1 \Hh_{ \omega_1} \). 
Clearly \( \card{ M } = \card{ \mathcal{C} } = 2^{\aleph_0} \), but \( x \notin M \), which contradicts~\eqref{eq:mincont}.
\end{proof}

We next explore the relationship between~\eqref{eq:mincont} and the breadth of the constructibility degrees. 
We need the following well-known fact.

\begin{lemma}\label{lem:cardelem} 
For every \( M \prec_1 \Hh_{\omega_1} \) the following hold:
\begin{enumerate}[label={\upshape (\alph*)}]
\item\label{lem:cardelem-1}
\( M \) is transitive.
\item \label{lem:cardelem-2}
\( \card{ M } = \card{ M \cap \R } \).
\item \label{lem:cardelem-3}
If \( \R \subseteq M \), then \( M = \Hh_{\omega_1} \).
\end{enumerate}
\end{lemma}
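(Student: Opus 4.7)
The plan is to dispatch the three items in order by means of the standard \( \Sigma_1 \)-definable coding of hereditarily countable sets by reals: there is a \( \Sigma_1 \) formula \( \varphi ( r , x ) \)---for instance ``\( \exists \pi \, ( \pi \colon \omega \to \TC ( \set{ x } ) \text{ is a surjection with } \pi ( 0 ) = x \text{ and } \pi ( m ) \in \pi ( n ) \IFF \seq{ m , n } \in r ) \)''---such that every \( x \in \Hh_{ \omega_1 } \) is coded by some \( r \in \R \) and \( r \) uniquely determines the \( x \) it codes. Throughout, I use that \( \Sigma_1 \)-elementarity gives \( \Pi_1 \)-elementarity by negation, hence full \( \Delta_0 \)-absoluteness between \( M \) and \( \Hh_{ \omega_1 } \), even before transitivity of \( M \) is established.

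\textbf{For \ref{lem:cardelem-1}}, I first observe that ``\( y = \omega \)'' and, for each concrete \( n \in \omega \), ``\( y = n \)'' are \( \Delta_0 \); applying \( \Sigma_1 \)-elementarity to \( \exists y \, ( y = \omega ) \) and \( \exists y \, ( y = n ) \) yields \( \omega \in M \) and \( \omega \subseteq M \). Now fix \( x \in M \); if \( x \neq \emptyset \), the \( \Sigma_1 \) statement ``\( \exists f \, ( f \colon \omega \twoheadrightarrow x ) \)'' holds in \( \Hh_{ \omega_1 } \) with parameters \( \omega , x \in M \), so by elementarity there is such an \( f \in M \). Then, for each \( n \in \omega \subseteq M \), the \( \Sigma_1 \) statement ``\( \exists y \, ( \seq{ n , y } \in f ) \)'' produces \( f ( n ) \in M \), and hence \( x = \ran ( f ) \subseteq M \).

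\textbf{For \ref{lem:cardelem-2} and \ref{lem:cardelem-3}}, the inequality \( \card{ M \cap \R } \le \card{ M } \) is trivial. Conversely, each \( x \in M \) satisfies the \( \Sigma_1 \) statement \( \exists r \in \R \, \varphi ( r , x ) \), so by elementarity there is \( r \in M \cap \R \) coding \( x \); choosing one such \( r_x \) per \( x \) via choice, injectivity of the decoding makes \( x \mapsto r_x \) injective, which proves \ref{lem:cardelem-2}. For \ref{lem:cardelem-3}, under \( \R \subseteq M \), any \( x \in \Hh_{ \omega_1 } \) has a code \( r \in \R \subseteq M \), and the \( \Sigma_1 \) statement \( \exists y \, \varphi ( r , y ) \) yields \( y \in M \), which must equal \( x \) by uniqueness of decoding.

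\textbf{The only point needing care} is the \( \Sigma_1 \)-definability of \( \varphi \), a classical fact; once this is granted, the lemma reduces to spotting the correct \( \Sigma_1 \) witness in each case, and no serious obstacle remains.
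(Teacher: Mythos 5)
Your proof is correct and follows essentially the same strategy as the paper: part~\ref{lem:cardelem-1} via a surjection from \( \omega \) extracted by \( \Sigma_1 \)-elementarity, and parts~\ref{lem:cardelem-2}, \ref{lem:cardelem-3} via the \( \Sigma_1 \)-definable coding of hereditarily countable sets by reals. The only stylistic difference is that the paper works with the \( \Delta_1 \)-definable Mostowski-collapse decoding function \( G \), whereas you work with a coding relation \( \upvarphi ( r , x ) \); these are interchangeable. Your write-up is actually slightly more careful on two points the paper glosses over: you explicitly establish \( \omega \subseteq M \) (not just \( \omega \in M \)) before extracting \( f ( n ) \) for each \( n \), and you instantiate the \( \Sigma_1 \)-elementarity pointwise for each \( x \in M \) rather than invoking a \( \Pi_2 \)-shaped universally-quantified sentence, which is the correct way to apply a \( \Sigma_1 \)-elementarity hypothesis.
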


\begin{proof}
\ref{lem:cardelem-1}.
Since \( M\prec_1 \Hh_{\omega_1} \), we must have \( \omega \in M \). 
Moreover, since \( \Hh_{ \omega_1 } \Models \)``Every set is countable'', the same sentence holds in \( M \). 
Fix a nonempty \( x \in M \), then there is a function \( f \in M \) such that 
\[
M \Models f \colon \omega \to x \text{ is a surjection}.
\] 
Hence the function \( f \) is a surjection of \( \omega \) onto \( x \) also according to \( \Hh_{\omega_1} \) (and \( \Vv \)). 
Therefore \( x\subseteq M \). 
Thus, \( M \) is transitive.

\smallskip

\ref{lem:cardelem-2}, \ref{lem:cardelem-3}.
It is well known (e.g. see~\cite[\S VII.3]{Simpson:1999aa}, or~\cite[Lemma 25.25]{Jech:2003pd}, or~\cite[Proposition 13.8]{Kanamori:2003zk}) that the map \( G \) which, given any real that recursively encodes an extensional and well-founded relation on \( \omega \), returns the Mostowski collapse of such relation, is \( \Delta_1 \)-definable over \( \Hh_{\omega_1} \). 
Moreover, 
\[
\Hh_{\omega_1} \Models \FORALL{x} \EXISTS{y} ( y\subseteq \omega \text{ and } G ( y ) = x ).
\]
Since \( M \prec_1 \Hh_{ \omega_1 } \), the same sentence is satisfied by \( M \), thus \ref{lem:cardelem-2} and \ref{lem:cardelem-3} follow.
\end{proof}

\begin{proposition}\label{prop:RL}
The following are equivalent:
\begin{enumerate}[label={\upshape (\alph*)}]
\item \label{prop:RL-1}
\( \R \subseteq \Ll \).
\item \label{prop:RL-2}
\( \CH + \eqref{eq:mincont} \).
\end{enumerate}
\end{proposition}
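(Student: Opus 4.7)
The plan is to prove both implications separately. For $(a)\Rightarrow(b)$ I use $\Sigma_1$-elementarity together with the $\Sigma_1$-definability of the constructible hierarchy to absorb every level $\Ll_\alpha$ (for $\alpha<\omega_1$) into any sufficiently large $\Sigma_1$-elementary substructure of $\Hh_{\omega_1}$. For $(b)\Rightarrow(a)$ I split on whether $\omega_1^{\Ll}=\omega_1^{\Vv}$, handling the nontrivial case via Proposition~\ref{prop:nocohen}.

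For $(a)\Rightarrow(b)$: Since $\R\subseteq\Ll$ one has $\card{\R}=\card{\R^{\Ll}}^{\Vv}=\card{\aleph_1^{\Ll}}^{\Vv}\le\aleph_1$, yielding $\CH$; moreover $\omega_1^{\Ll}=\omega_1^{\Vv}$, since otherwise $\omega_1^{\Ll}$ would be a countable-in-$\Vv$ ordinal coded by some real of $\Vv$, which by $\R\subseteq\Ll$ would lie in $\Ll$, absurdly collapsing $\omega_1^{\Ll}$ to be countable inside $\Ll$. To establish~\eqref{eq:mincont}, fix $M\prec_1\Hh_{\omega_1}$ with $\card{M}=2^{\aleph_0}=\aleph_1$; by Lemma~\ref{lem:cardelem}\ref{lem:cardelem-1}, $M$ is transitive, and $\gamma\coloneqq M\cap\mathrm{Ord}$ is an ordinal $\le\omega_1^{\Vv}$. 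The $\Sigma_1$-definability of the map $r\mapsto(\text{least }\alpha\text{ with }r\in\Ll_\alpha)$ combined with $\Sigma_1$-elementarity forces every real of $M$ into $\Ll_\gamma$, so $\aleph_1=\card{M\cap\R}\le\card{\Ll_\gamma}^{\Vv}=\card{\gamma}^{\Vv}$, giving $\gamma=\omega_1^{\Vv}=\omega_1^{\Ll}$. Using once more the $\Sigma_1$-definability of $\alpha\mapsto\Ll_\alpha$, we get $\Ll_\alpha\in M$ for each $\alpha<\omega_1^{\Ll}$; transitivity then gives $\R\subseteq\Ll_{\omega_1^{\Ll}}\subseteq M$, and Lemma~\ref{lem:cardelem}\ref{lem:cardelem-3} concludes $M=\Hh_{\omega_1}$.

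For $(b)\Rightarrow(a)$: Assume $\CH$ and~\eqref{eq:mincont}. If $\omega_1^{\Ll}=\omega_1^{\Vv}$, then by Lévy-Shoenfield $\Ll_{\omega_1^{\Ll}}\prec_1\Hh_{\omega_1}$ has cardinality $\omega_1^{\Vv}=2^{\aleph_0}$, so~\eqref{eq:mincont} forces $\Ll_{\omega_1^{\Ll}}=\Hh_{\omega_1}$, hence $\R\subseteq\Ll$. The remaining case $\omega_1^{\Ll}<\omega_1^{\Vv}$ must be excluded: in this regime $\aleph_1^{\Ll}$ is a countable-in-$\Vv$ ordinal, so $\card{\R\cap\Ll}^{\Vv}=\card{\aleph_1^{\Ll}}^{\Vv}=\aleph_0$. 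Consequently the family of $\Ll$-coded dense open subsets of Cohen forcing is countable in $\Vv$; the complements of their corresponding dense open sets in $2^{\omega}$ form a countable family of meager sets whose union is still meager, so the comeager complement produces a real Cohen-generic over $\Ll$, directly contradicting Proposition~\ref{prop:nocohen}.

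The main obstacle is the second case of $(b)\Rightarrow(a)$: a direct Löwenheim-Skolem construction of a proper $\Sigma_1$-elementary substructure of $\Hh_{\omega_1}$ of size $2^{\aleph_0}$ stalls at each successor stage, where one needs to add a new real without making some ``target'' real $\Sigma_1$-definable from the existing parameters -- and there is no clear combinatorial way to pick such a real directly. The key insight is that $\omega_1^{\Ll}<\omega_1^{\Vv}$ automatically shrinks the $\Ll$-coded Baire-category apparatus to countable size, producing Cohen reals over $\Ll$ for free and letting Proposition~\ref{prop:nocohen} finish the argument.
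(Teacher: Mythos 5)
Your proof is correct, and for \ref{prop:RL-2}\(\Rightarrow\)\ref{prop:RL-1} it tracks the paper's argument exactly: split on whether \(\omega_1^{\Ll}=\omega_1\), invoke Lévy--Shoenfield plus~\eqref{eq:mincont} in the first case, and deduce the existence of an \(\Ll\)-generic Cohen real (contradicting Proposition~\ref{prop:nocohen}) in the second. The difference is in \ref{prop:RL-1}\(\Rightarrow\)\ref{prop:RL-2}. The paper's route is more direct: since \(\R\subseteq\Ll\) gives \(\Hh_{\omega_1}=\Ll_{\omega_1}\), any \(M\prec_1\Hh_{\omega_1}=\Ll_{\omega_1}\) must, by Gödel's Condensation Lemma, be some \(\Ll_\alpha\) with \(\alpha\le\omega_1\), and then it is immediate that \(M\) is either countable or all of \(\Hh_{\omega_1}\). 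You instead bypass condensation: you argue from the transitivity of \(M\), use the \(\Sigma_1\)-definability of the \(\Ll\)-hierarchy to trap \(M\cap\R\) inside \(\Ll_{\gamma}\) where \(\gamma=M\cap\mathrm{Ord}\), conclude \(\gamma=\omega_1\) from \(\card{M\cap\R}=\aleph_1\), and then re-use \(\Sigma_1\)-definability of \(\alpha\mapsto\Ll_\alpha\) to pull all of \(\Ll_{\omega_1}\) (hence all of \(\R\)) into \(M\). Both arguments work; the condensation route is shorter and packages the ``shape'' of \(M\) in one stroke, while your version is more hands-on, trading the black box of condensation for two applications of \(\Sigma_1\)-elementarity. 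One cosmetic point: the intermediate claim \(\omega_1^{\Ll}=\omega_1^{\Vv}\) in your \ref{prop:RL-1}\(\Rightarrow\)\ref{prop:RL-2} is established by the standard coding argument and is fine, but with the condensation approach it is never needed as a separate step.
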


\begin{proof}
\ref{prop:RL-1}\( \Rightarrow \)\ref{prop:RL-2}. 
Since \( \R\subseteq \Ll \), then \( \Hh_{\omega_1} = \Ll_{\omega_1} \) and \( \CH \) holds. 
Fix an \( M \prec_1 \Ll_{ \omega_1 } \). 
By a direct corollary of Gödel's Condensation Lemma~\cite[Lemma 5.10]{Devlin:1984aa}, \( M = \Ll_{\alpha} \) for some \( \alpha \le \omega_1 \). 
When \( \alpha < \omega_1, \ M \) is countable, otherwise it coincides with \( \Ll_{\omega_1} = \Hh_{\omega_1} \). 
Therefore~\eqref{eq:mincont} holds.

\smallskip

\ref{prop:RL-2}\( \Rightarrow \)\ref{prop:RL-1}. 
Note that we must have \( \omega_1^{\Ll} = \omega_1 \), as otherwise there would exists an \(\Ll\)-generic Cohen real, against Proposition~\ref{prop:nocohen}. 
By Lévy-Shoenfield, \( \Ll_{\omega_1} \prec_1 \Hh_{\omega_1} \). 
By \( \CH \), \( \Ll_{\omega_1 } \) has size the continuum, and therefore, by~\eqref{eq:mincont}, \( \Ll_{\omega_1} = \Hh_{\omega_1} \). 
Hence \( \R\subseteq \Ll \).
\end{proof}

Note that the statement \(\R \subseteq \Ll\) is equivalent to saying that \( \constructibledegrees \) has breadth \( 0 \). 
A direct consequence of Theorem~\ref{th:Ditor}\ref{th:Ditor-1} applied to the upper semi-lattice of constructibility degrees is the following:

\begin{proposition}\label{prop:realbreadth}
Suppose that \( \constructibledegrees \) has breath at most \( n \) for some \( n \in \omega \). 
Then, \( 2^{\aleph_0} \le \aleph_{ n + 1 } \).
\end{proposition}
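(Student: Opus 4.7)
The plan is to combine the two ingredients listed in Section~\ref{sec:constructibility} about the semi-lattice $\constructibledegrees$ with Ditor's cardinality bound (Theorem~\ref{th:Ditor}\ref{th:Ditor-1}). Recall from Section~\ref{sec:constructibility} that every degree $\mathbf{a} \in \constructibledegrees$ satisfies $\card{\mathbf{a}} \leq \aleph_1$, and moreover the set of predecessors ${\downarrow}\mathbf{a} = \setof{\mathbf{b}}{\mathbf{b} \leqc \mathbf{a}}$ has size at most $(\aleph_1)^{\Ll[\mathbf{a}]} \leq \aleph_1$. So for every $\mathbf{a}$ we have $\card{{\downarrow}\mathbf{a}} < \aleph_2$.

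First I would dispose of the case $n = 0$: by the observation immediately preceding the proposition, $\constructibledegrees$ having breadth $0$ means $\constructibledegrees = \set{\mathbf{0}}$, i.e. $\R \subseteq \Ll$. Since the reals of $\Ll$ are in bijection (inside $\Ll$) with $\omega_1^{\Ll}$, one gets $2^{\aleph_0} \leq \card{\omega_1^{\Ll}} \leq \aleph_1 = \aleph_{n+1}$.

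For $n \geq 1$, I would apply Theorem~\ref{th:Ditor}\ref{th:Ditor-1} to $P = \constructibledegrees$ with $\alpha = 2$, which is legitimate since $\card{{\downarrow}\mathbf{a}} < \aleph_2$ for every degree $\mathbf{a}$. This yields
\[
\card{\constructibledegrees} \leq \aleph_{2 + n - 1} = \aleph_{n+1}.
\]
Finally, since $\Pow(\omega)$ is partitioned into the classes represented by elements of $\constructibledegrees$, and each such class has size at most $\aleph_1$, we conclude
\[
2^{\aleph_0} = \card{\Pow(\omega)} \leq \card{\constructibledegrees} \cdot \aleph_1 \leq \aleph_{n+1} \cdot \aleph_1 = \aleph_{n+1},
\]
the last equality holding because $n + 1 \geq 2$.

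There is no real obstacle here, the argument being essentially a bookkeeping exercise: the only subtlety is remembering to treat $n = 0$ separately (as Theorem~\ref{th:Ditor} is stated for $n > 0$) and to pass from the bound on $\card{\constructibledegrees}$ to a bound on $2^{\aleph_0}$ via the fact that each degree is itself of size at most $\aleph_1$.
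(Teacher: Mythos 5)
Your proof is correct and is essentially the argument the paper intends, since the paper itself just labels Proposition~\ref{prop:realbreadth} a ``direct consequence'' of Theorem~\ref{th:Ditor}\ref{th:Ditor-1}: you apply Ditor's bound with \( \aleph_\alpha = \aleph_2 \) (justified by \( \card{{\downarrow}\mathbf{a}} \le \aleph_1 \) for every degree), then pass from \( \card{\constructibledegrees} \le \aleph_{n+1} \) to \( 2^{\aleph_0} \le \aleph_{n+1} \) using the fact that each degree has size at most \( \aleph_1 \), with the trivial \( n = 0 \) case handled separately because Ditor's theorem is stated only for positive breadth. The bookkeeping is all in order.
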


However, using the definability of the constructibility preorder, we can say something more (cf. Proposition~\ref{prop:RL}).

\begin{theorem}\label{th:realbreadth}
Suppose that \( \constructibledegrees \) has breath at most \( n \) for some \( n \in \omega \). 
Then, either \( 2^{\aleph_0} \le \aleph_n \) or~\eqref{eq:mincont}.
\end{theorem}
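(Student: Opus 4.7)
When $n = 0$ the breadth hypothesis forces $\constructibledegrees = \{\mathbf{0}\}$, i.e., $\R \subseteq \Ll$, and then Proposition~\ref{prop:RL} gives \eqref{eq:mincont}. So I may assume $n \ge 1$ and that \eqref{eq:mincont} fails, and aim to deduce $2^{\aleph_0} \le \aleph_n$. Fix $M \prec_1 \Hh_{\omega_1}$ of size $2^{\aleph_0}$ with $M \ne \Hh_{\omega_1}$ as provided by the failure of \eqref{eq:mincont}; by Lemma~\ref{lem:cardelem}, $M$ is transitive and $|M \cap \R| = 2^{\aleph_0}$.

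The strategy is to produce a proper ideal $I$ of $\constructibledegrees$ containing $\eq{r}_c$ for every $r \in M \cap \R$; then Theorem~\ref{th:Ditor}\ref{th:Ditor-2}, applied with $\alpha = 2$ (as each degree has at most $\aleph_1$ predecessors), gives $|I| < \aleph_{n+1}$, hence $|I| \le \aleph_n$. Since every real in $M$ lies in some degree of $I$ and each degree has size at most $\aleph_1$, this yields $2^{\aleph_0} = |M \cap \R| \le |I| \cdot \aleph_1 \le \aleph_n$ (using $n \ge 1$), as required.

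The natural candidate is $I \coloneqq \{\eq{r}_c : r \in M \cap \R\}$. Closure under joins is immediate from $M$ being closed under the recursive operation $\oplus$. Downward closure reduces to the assertion that $\Ll[r] \cap \R \subseteq M$ for every $r \in M \cap \R$: a real $b \in \Ll_\alpha[r]$ is $\Sigma_1$-definable from $r$ and the countable ordinal $\alpha$, so provided $\alpha \in M$, $b$ lies in $M$ by $\Sigma_1$-elementarity together with transitivity of $M$. Properness of $I$ then follows at once: if $I = \constructibledegrees$, every real $x$ is $\eqc$-equivalent to some $r \in M$, forcing $x \in \Ll[r] \cap \R \subseteq M$, whence $\R \subseteq M$, and Lemma~\ref{lem:cardelem}\ref{lem:cardelem-3} yields $M = \Hh_{\omega_1}$, contradicting our choice.

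The delicate point is securing $M \cap \omega_1 = \omega_1$, needed for the downward closure argument; a priori a transitive $\Sigma_1$-elementary substructure of $\Hh_{\omega_1}$ of size $2^{\aleph_0}$ might contain only a small initial segment of $\omega_1$. A natural remedy is to replace $M$ by the $\Sigma_1$-hull in $\Hh_{\omega_1}$ of $M$ together with all countable ordinals: this remains a transitive $\Sigma_1$-elementary substructure of size $2^{\aleph_0}$, and is a proper subset of $\Hh_{\omega_1}$ precisely when $\{\eq{r}_c : r \in M \cap \R\}$ fails to be cofinal in $\constructibledegrees$. The main subtlety I anticipate is dispatching the residual cofinal case — presumably either by a more refined choice of ideal or by invoking Theorem~\ref{th:Ditor}\ref{th:Ditor-2} in tandem with Lévy-Shoenfield absoluteness to bound the cofinality.
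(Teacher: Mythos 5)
Your approach for the ``good'' case (when one can arrange $M\cap\omega_1=\omega_1$, so that $I=\{\eq{r}_c : r\in M\cap\R\}$ really is a downward-closed proper ideal of $\constructibledegrees$ and Ditor's Theorem~\ref{th:Ditor}\ref{th:Ditor-2} applies) is essentially identical to Case~2 of the paper's proof, and it is correct. But the ``delicate point'' you flag is a genuine gap, and the proposed remedy does not close it. Passing to the $\Sigma_1$-hull $M'$ of $M\cup\omega_1$ produces (as you correctly observe) a set that equals $\bigcup_{r\in M\cap\R}\Ll_{\omega_1}[r]$, and this is all of $\Hh_{\omega_1}$ exactly when $\{\eq{r}_c : r\in M\cap\R\}$ is cofinal in $\constructibledegrees$. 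In that situation the ideal argument yields nothing, and you explicitly leave the case open. Note also that without the hull step the original $I$ need not even be downward closed in $\constructibledegrees$ when $M\cap\omega_1<\omega_1$ (only $\Ll_{M\cap\omega_1}[r]\cap\R$ is guaranteed to land in $M$, not all of $\Ll[r]\cap\R$), so Ditor~\ref{th:Ditor-2} cannot be applied directly to it either.

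The paper closes exactly this gap by a different and sharper use of Ditor's theorem. When $\alpha\coloneqq M\cap\omega_1<\omega_1$, one works with $\constructibledegrees^M$, the degree structure as computed \emph{inside} $M$: by $\Sigma_1$-elementarity this is a sub-semi-lattice of $\constructibledegrees$ (so still of breadth $\le n$), and the crucial extra information is that every degree of $\constructibledegrees^M$ has only \emph{countably} many predecessors, since all of them lie inside the countable set $\Ll_\alpha[\,\cdot\,]$. Applying Theorem~\ref{th:Ditor}\ref{th:Ditor-1} with the bound $\aleph_1$ (rather than $\aleph_2$) gives $\card{\constructibledegrees^M}\le\aleph_{1+n-1}=\aleph_n$, hence $\card{M}=\card{M\cap\R}\le\aleph_n$ by Lemma~\ref{lem:cardelem}\ref{lem:cardelem-2}. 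This shows the case $M\cap\omega_1<\omega_1$ simply cannot occur once $\card{M}=2^{\aleph_0}>\aleph_n$, which is precisely what you needed. So your overall strategy is sound and captures half of the argument, but the missing half requires switching from Ditor~\ref{th:Ditor-2} on an ideal of the ambient $\constructibledegrees$ to Ditor~\ref{th:Ditor-1} on the \emph{internal} structure $\constructibledegrees^M$, exploiting the smaller predecessor bound available there.
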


\begin{proof}
We already proved the case \( n = 0 \) in Proposition~\ref{prop:RL}. 
So fix an \( n > 0 \) and suppose that \( 2^{ \aleph_0 } = \aleph_{ n + 1 } \) and that \( \constructibledegrees \) has breadth \( n \), towards proving~\eqref{eq:mincont}. 

Fix some \( M \prec_1 \Hh_{\omega_1} \). 
By part~\ref{lem:cardelem-1} of Lemma~\ref{lem:cardelem}, \( \omega_1 \cap M  \) is an ordinal \( \alpha \leq \omega _1 \).
Let \( ( \constructibledegrees ^M , \leqc^M ) \) be the upper semi-lattice of constructibility degrees relativized to \( M \).
For every \( x , y \in \R \cap M \), we have
\begin{equation}\label{eq:minimalbreadth1}
 x \leqc^M y \IFF x \in \Ll_{\alpha} [ y ] .
\end{equation}
There are two cases:
\begin{description}
\item[Case 1]
\( \alpha < \omega_1 \).
As the sentence ``\( \constructibledegrees \) has breadth at most \( n \)" is \( \Pi_2 \) over \( \Hh_{\omega_1} \) and \( M \prec_1 \Hh_{\omega_1} \), we have that \( \constructibledegrees ^M \) has breadth at most \( n \).
By~\eqref{eq:minimalbreadth1} and case assumption,  any element of \(\constructibledegrees ^M\) has at most countably many \( \leqc^M \)-predecessors. 
By part~\ref{th:Ditor-1} of  Theorem~\ref{th:Ditor}, we have \( \card{ \constructibledegrees ^M } \le \aleph_n \) and therefore \( \card{ \R \cap M } \le \card{ \constructibledegrees ^M }\cdot \aleph_0 \le \aleph_n \). 
By part~\ref{lem:cardelem-2} of Lemma~\ref{lem:cardelem}, we conclude that \( \card{ M } \le \aleph_n < 2^{ \aleph_0 } \).
\item[Case 2]
\( \alpha = \omega_1 \).
If \( x , y \in \R \) and \( y \in M \), and \( x \leqc y \), then \( x \in \Ll_{ \omega _1 } [ y ] \) so \( x \leqc^M y \) by~\eqref{eq:minimalbreadth1}. 
In particular, \( \constructibledegrees ^M \) is an ideal of \( \constructibledegrees \). 
By part~\ref{th:Ditor-2} of Theorem~\ref{th:Ditor}, if \( \constructibledegrees ^M \) is a proper ideal, then \( \card{ \constructibledegrees ^M } \le \aleph_n \). 
In this case \( \card{ \R \cap M } \le \card{ \constructibledegrees ^M } \cdot \aleph_1 \le \aleph_n \) and therefore \( \card{ M } \le \aleph_n < 2^{ \aleph_0 } \).
Otherwise, we would have \( \R \subseteq M \) and then \( M = \Hh_{\omega_1} \).
\end{description}
Either way~\eqref{eq:mincont} holds. 
\end{proof}

\begin{remark}\label{rmk:breadth}
Theorem~\ref{th:realbreadth} is a meaningful extension of Proposition~\ref{prop:realbreadth} only if it is consistent that \( 2^{ \aleph_0 } = \aleph_{ n + 1 } \) and that \( \constructibledegrees \) has breadth \( n \).
The natural question is whether this assumption is consistent for every \( n > 0 \). 
We know that for \( n = 1 \) this is the case, with the iterated perfect set model witnessing the consistency (see Section~\ref{sec:constructibility}). 
It is open whether it is the case also for \( n > 1 \) (see Section~\ref{sec:largesacks}).
\end{remark}

\begin{remark}
In the light of Proposition~\ref{prop:RL}, one may be tempted to think that \( 2^{\aleph_0}= \aleph_{ n + 1 } + \eqref{eq:mincont} \) may imply that \( \constructibledegrees \) has breadth at most \( n \). 
This is certainly true when \( n = 0 \) (which is the content of Proposition~\ref{prop:RL}), but it fails badly already for \( n=1 \). 
Indeed, let \( \forcing{Q} \) be the countable-support iteration \( \seqof{ \forcing{Q}_\alpha }{ \alpha < \omega_2 } \) such that \( \forcing{Q}_\alpha \) is forced to be \( \forcing{S}\times \forcing{S} \), the product of two Sacks forcing---see~\cite[Proposition 2.4]{Groszek:2019aa} for some general properties of this kind of forcing. 
If we let \( G \) be a \( \forcing{Q} \)-generic filter over \( \Ll \), then it can be shown that, in \( \Ll [ G ] \), the continuum is \( \aleph_2 \) and~\eqref{eq:mincont} holds, but the breadth of \( \constructibledegrees \) in \( \Ll [ G ] \) is \( 2 \), cofinally---i.e. \( \constructibledegrees \) has breadth \( 2 \) above \( \mathbf{a} \) for every \( \mathbf{a} \in \constructibledegrees\).
\end{remark}

\section{Definable Sierpi\'{n}ski coverings}\label{sec:Definablecover}

The next theorem shows the connection between the existence of \( \varSigma_2^1 \) Sierpi\'{n}ski coverings and the breadth of the upper semi-lattice of the constructibility degrees. 
The case \( n = 0 \) has already been shown by Törnquist and Weiss in~\cite{Tornquist:2015ys}---see the introduction.

\begin{theorem}\label{th:main}
For every \( n \in \omega \), the following are equivalent:
\begin{enumerate}[label={\upshape (\alph*)}]
\item\label{th:main-1} 
\( \constructibledegrees \) has breadth at most \( n \).
\item\label{th:main-2}
There are \( \varSigma_2^1 \) sets \( A_0 , \dots , A_{ n + 1 } \subseteq \R ^{ n + 2 } \) such that \( \R ^{ n + 2 } = \bigcup_{ i \le n } A_i \) and for all \( i \le n + 1 \), for all \( \ell \in \mathcal{L}_i ( \R ^{ n + 2 } ) \), \( \ell\cap A_i \) is countable.
\item\label{th:main-3}
There are \( \varSigma_2^1 \) sets \( A_0, \dots, A_{ n + 2 } \subseteq \R ^{n + 3} \) such that \( \R ^{ n + 3 } = \bigcup_{ i \le n + 2 } A_i \) and for all \( i \le n + 2 \), for all \( \ell \in \mathcal{L}_i ( \R ^{ n + 3} ) \), \( \ell \cap A_i \) is finite.
\end{enumerate}
\end{theorem}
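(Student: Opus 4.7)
The plan is to establish the cycle (a)$\Rightarrow$(b)$\Rightarrow$(a) and (a)$\Rightarrow$(c)$\Rightarrow$(a), extending the Törnquist--Weiss argument~\cite{Tornquist:2015ys} (case $n=0$) in a uniform way. Both direct implications rely on the canonical good $\Sigma^1_2(a)$ well-ordering $<_a$ of $\Ll[a]\cap\R$, every initial segment of which is countable in $V$; the converses rest on the Mansfield--Solovay dichotomy for $\Sigma^1_2$ sets together with mutually generic Cohen forcing.

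For (a)$\Rightarrow$(b), write $a_i=\bigoplus_{j\ne i}x_j$, so that $\Ll[a_i]=\Ll[(x_j)_{j\ne i}]$, and set
\[
A_i=\bigl\{\vec x\in\R^{n+2}:x_i\in\Ll[a_i]\text{ and }\exists k\ne i\,(x_i\le_{a_i}x_k)\bigr\}.
\]
Uniform $\Sigma^1_2$-definability of $<_a$ in the parameter $a$ makes each $A_i$ a $\Sigma^1_2$ set, and for any line $\ell\in\mathcal{L}_i(\R^{n+2})$ the intersection $\ell\cap A_i$ is a fixed countable initial segment of $<_{a_i}$. For coverage, breadth $\le n$ forces every $(n+1)$-subset of pairwise distinct elements of $\constructibledegrees$ to be redundant, and a short bookkeeping (treating repeated degrees separately) shows that $R=\{i:x_i\in\Ll[a_i]\}$ has at least two elements; moreover for $i\in R$ one has $\Ll[a_i]=\Ll[\vec x]$, hence all $<_{a_i}$ with $i\in R$ coincide with a common well-ordering $<_*$, and a pigeonhole on the positions $\{\mathrm{pos}_{<_*}(x_i):i\in R\}$ produces the required witness. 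The argument for (a)$\Rightarrow$(c) follows the same blueprint in $\R^{n+3}$, but the countable initial segment is carved down to a finite one by a $\Sigma^1_2$-uniform rendition of the classical Sierpi\'nski--Kuratowski pairing device: using a canonical family of bijections $e_\alpha:\omega\to\alpha$ ($\alpha<\omega_1$), one keeps only the $x_i$'s whose $e$-code (for an $\alpha$ determined by the fixed coordinates $(x_j)_{j\ne i}$) is bounded by a natural-number function of the positions of those coordinates.

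For the converse implications (b)$\Rightarrow$(a) and (c)$\Rightarrow$(a), suppose the covering exists but breadth $>n$, and pick reals $x_0,\dots,x_n$ realizing an irredundant $(n+1)$-subset. For all but countably many $y\in\R$, the tuple $(\vec x,y)\notin A_{n+1}$ (countable/finite intersection with $\ell_{n+1}$), so $(\vec x,y)\in A_i$ for some $i\le n$; the $\mathbf{e}_i$-slice of $A_i$ through $(\vec x,y)$ is a countable (respectively, finite) $\Sigma^1_2$ set, and by Mansfield--Solovay is contained in $\Ll[(x_j)_{j\ne i,\,j\le n},y]$, placing $x_i$ there. Pigeonholing over $y$, fix $i$ for which $T_i:=\{y:x_i\in\Ll[(x_j)_{j\ne i},y]\}$ is uncountable; Mansfield then supplies a perfect subset of $T_i$ coded in $\Ll[\vec x]$, and two mutually Cohen-generic reals $y,y'$ over $\Ll[\vec x]$ chosen from it force
\[
x_i\in\Ll[(x_j)_{j\ne i},y]\cap\Ll[(x_j)_{j\ne i},y']=\Ll[(x_j)_{j\ne i}],
\]
contradicting irredundancy.

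The principal obstacles are (i) executing the finite refinement in (a)$\Rightarrow$(c), which requires a delicate $\Sigma^1_2$-uniform Sierpi\'nski--Kuratowski combinatorial argument, and (ii) handling the degenerate Mansfield alternative $T_i\subseteq\Ll[\vec x]$ uncountable in the converse: this must be excluded by an auxiliary argument, for instance by re-running the analysis with the roles of $x_i$ and $y$ swapped to invoke irredundancy in a different direction, or by showing directly that such a $T_i$ forces $\R\subseteq\Ll[\vec x]$ modulo a countable set and deriving a contradiction with the choice of the irredundant configuration.
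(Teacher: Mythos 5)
Your $(a)\Rightarrow(b)$ uses a genuinely different decomposition from the paper's: you set $a_i=\bigoplus_{j\neq i}x_j$ and put $\vec x\in A_i$ when $x_i\in\Ll[a_i]$ and $x_i\le_{a_i}x_k$ for some $k\neq i$, whereas the paper drops two coordinates and requires $x_i,x_j\leq_{\mathrm c}\bigoplus_{k\neq i,j}x_k$ with $\Ll[\bigoplus_{k\neq i,j}x_k]\Models x_i\unlhd x_j$. Both yield countable slices (finite unions of $\unlhd$-initial segments) and both cover; your coverage argument via $R=\{i:x_i\in\Ll[a_i]\}$, showing $|R|\ge 2$ and then taking a $<_*$-maximum among $\{x_i:i\in R\}$, is correct and arguably more transparent than the paper's two successive applications of redundancy. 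The $(a)\Rightarrow(c)$ sketch is too loose to audit in detail, but the intent matches the paper's use of the $\unlhd$-least bijection between the $\unlhd$-predecessors of a distinguished coordinate and $\omega$.

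The converses have a genuine gap. You let $y$ range over all of $\R$, form the $\Sigma^1_2(\vec b)$ set $T_i=\{y:b_i\in\Ll[(b_j)_{j\neq i},y]\}$, and try to cancel $y$ by mutual Cohen genericity. Two things fail. First, when $T_i\not\subseteq\Ll[\vec b]$, Mansfield--Solovay yields a perfect tree in $\Ll[\vec b]$ whose branches lie in $T_i$; such branches need not be Cohen generic over $\Ll[\vec b]$ (they are at best generic for the tree's own forcing, and the set of branches may well be nowhere dense), so mutually Cohen-generic $y,y'$ cannot simply be ``chosen from it''. Second, when $T_i\subseteq\Ll[\vec b]$, or more generally when $\R\subseteq\Ll[\vec b]$, there may be no generics of any flavour over $\Ll[\vec b]$ present in $V$, so neither of your proposed hedges can be carried out in an arbitrary model. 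The paper sidesteps all of this with different bookkeeping: the extra coordinate $u_{n+1}$ is kept inside $\R\cap\Ll$. Then for each $i\le n$ the $\mathbf e_i$-slice of $A_i$ through $(\vec b,u_{n+1})$ has all of its parameters $(b_j)_{j\neq i}$ and $u_{n+1}$ already in $\Ll[\bigoplus_{k\neq i}b_k]$, Mansfield--Solovay pins the slice inside $\Ll[\bigoplus_{k\neq i}b_k]$, and irredundancy gives $(\vec b,u_{n+1})\notin A_i$. Hence $(\vec b,u_{n+1})\in A_{n+1}$ for every $u_{n+1}\in\Ll$; if $\omega_1^\Ll=\omega_1$ the $\mathbf e_{n+1}$-line through $\vec b$ meets $A_{n+1}$ uncountably, a contradiction, and if $\omega_1^\Ll<\omega_1$ the paper passes to $\Ll[r]$ for a Cohen real $r$ over $\Ll$ (where $\omega_1^{\Ll[r]}=\omega_1^\Ll$, the covering restricts by Shoenfield, and $\constructibledegrees^{\Ll[r]}$ has infinite breadth) and replays. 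For $(c)\Rightarrow(a)$ the final coordinate is restricted to $\Q$ so that finiteness can be exploited. You should restructure your converse along these lines rather than relying on generics you cannot produce.
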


Before delving into its proof, we need the following lemma.

\begin{lemma}\label{lemma:satisf}
Given a \( \Sigma_1 \) formula \( \upvarphi ( x , y ) \) in the language of set theory, the set 
\[
 \setofLR{ ( x , y ) \in \R ^2 }{ x \leqc y \text{ and } \Ll [ y ] \Models \upvarphi ( x , y ) }
\]
is \( \varSigma_2^1 \).
\end{lemma}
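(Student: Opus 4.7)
The plan is to reformulate the condition so that its descriptive complexity becomes transparent. Using \( \Sigma_1 \)-reflection inside \( \Ll[y] \) together with the upward absoluteness of \( \Sigma_1 \) formulas from initial segments to \( \Ll[y] \), the conjunction "\( x \leqc y \) and \( \Ll[y] \Models \upvarphi(x,y) \)" is equivalent to the existence of a countable ordinal \( \alpha \) such that \( x \in \Ll_\alpha[y] \) and \( \Ll_\alpha[y] \Models \upvarphi(x,y) \). Indeed, if \( x \in \Ll[y] \) and \( \Ll[y] \Models \upvarphi(x,y) \), then some \( \Ll_\alpha[y] \) with \( \alpha < \omega_1^{\Ll[y]} \le \omega_1 \) already contains both \( x \) and a witness for \( \upvarphi \); the converse is immediate from the upward absoluteness of \( \Sigma_1 \) formulas.

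Each such \( \Ll_\alpha[y] \) is countable in \( \Vv \), so it admits a real code: there is \( r \in \R \) encoding a well-founded extensional binary relation on \( \omega \) whose Mostowski collapse is precisely \( \Ll_\alpha[y] \). The original condition is thus equivalent to the existence of such an \( r \) coding a transitive set \( M \) which contains \( x \) and \( y \), satisfies \( \upvarphi(x,y) \), and satisfies a finite fragment of \( \ZF^{-} \) large enough to enforce \( M \Models \Vv = \Ll[y] \). By Gödel condensation any such \( M \) is of the form \( \Ll_\alpha[y] \), closing the loop.

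To count the complexity, well-foundedness of the relation \( E_r \) coded by \( r \) is \( \Pi^1_1 \) in \( r \); extensionality of \( E_r \), together with the predicate "\( n \) is the representative of the real \( z \) in the collapse of \( E_r \)", is arithmetic in \( r \) and \( z \); and the satisfaction of any fixed first-order formula in the countable structure coded by \( r \) is also arithmetic in \( r \) and the parameter codes (this is where Lemma~\ref{lem:cardelem} and the \( \Delta_1 \)-definability of the collapse operator over \( \Hh_{\omega_1} \) come into play). Hence the matrix of our existential over \( r \) is \( \Pi^1_1 \) in \( (r,x,y) \), and the outer existential over \( r \) makes the whole set \( \varSigma^1_2 \), as desired.

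The main subtlety I expect is the bookkeeping needed to identify the internal copy of \( y \) inside the collapse of \( E_r \) with the external real \( y \): one has to verify that, because \( y \) and its transitive closure are hereditarily countable, the statement "\( n \) is the collapse-preimage of \( y \)" is arithmetic in \( r \) and \( y \), so that the interpretation of \( \upvarphi(x,y) \) inside \( M \) is correctly realized as an arithmetic condition on \( (r,x,y) \).
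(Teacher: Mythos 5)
Your proposal is correct, and its first (and crucial) step coincides with the paper's: since \( \upvarphi \) is \( \Sigma_1 \), the condition ``\( x \leqc y \) and \( \Ll[y] \Models \upvarphi(x,y) \)'' is equivalent, via reflection and Gödel condensation, to the existence of a countable ordinal \( \delta \) with \( x \in \Ll_\delta[y] \) and \( \Ll_\delta[y] \Models \upvarphi(x,y) \). Where you diverge is in the second step: the paper simply observes that this reformulation is \( \Sigma_1 \) over \( \Hh_{\omega_1} \) and then cites the general fact that \( \Sigma_1 \)-over-\( \Hh_{\omega_1} \) subsets of \( \R^2 \) are \( \varSigma^1_2 \) (\cite[Lemma 25.25]{Jech:2003pd}), whereas you unroll that abstract lemma: existentially quantify over a real \( r \) coding a well-founded extensional relation on \( \omega \) whose collapse is \( \Ll_\delta[y] \), note well-foundedness is \( \Pi^1_1(r) \), and check that the remaining conditions (extensionality, \( r \) coding a model of enough \( \ZF^- + \Vv = \Ll[y] \), \( n_x, n_y \) being the \( E_r \)-codes of \( x,y \), and satisfaction of the fixed formula \( \upvarphi \)) are arithmetic in \( (r,x,y) \). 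The two routes are mathematically the same; yours is more self-contained and makes the bound on the quantifier complexity visible, while the paper's is shorter. One small attribution slip: the arithmetic definability of satisfaction of a fixed formula in a structure coded by a real is a basic recursion-theoretic fact about coded countable structures, not a consequence of Lemma~\ref{lem:cardelem}, which concerns \( \Sigma_1 \)-elementary substructures of \( \Hh_{\omega_1} \) and the \( \Delta_1 \)-definability of the decoding map \emph{over} \( \Hh_{\omega_1} \) (that is precisely the ingredient of the black-box lemma you chose \emph{not} to cite).
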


\begin{proof}
It suffices to prove that our set is \( \Sigma_1 \) over \( \Hh_{\omega_1} \)~\cite[Lemma 25.25]{Jech:2003pd}. 
In other words, we need to show that there is a \( \Sigma_1 \) formula \( \uppsi ( x , y ) \) in the language of set theory such that 
\[
\bigl ( x \leqc y \text{ and } \Ll [ y ] \Models \upvarphi ( x , y ) \bigr ) \IFF \Hh_{ \omega_1 } \Models \uppsi ( x , y ) .
\]
By Gödel's Condensation Lemma, for every \( x \in \R \cap \Ll [ y ] \), 
\[
\Ll [ y ] \Models \upvarphi ( x , y ) \IFF \Ll_\delta [ y ] \Models \upvarphi ( x , y ) \text{, for some countable ordinal } \delta.
\]
Then, for any \( x , y \in \R \),
\begin{equation}\label{eq:2}
\left ( x \leqc y \text{ and } \Ll [ y ] \Models \upvarphi ( x , y ) \right ) \IFF \Hh_{\omega_1} \Models \exists \delta \bigl ( x \in \Ll_\delta [ y ] \text{ and } \Ll_\delta [ y ] \Models \upvarphi ( x , y ) \bigr ) .
\end{equation}
The sentence ``\( x \in \Ll_\delta [ y ] \)'' is \( \Delta_1 ( x , y, \delta ) \) over \( \Hh_{ \omega_1 } \). 
Regarding the sentence ``\( \Ll_\delta [ y ] \Models \upvarphi ( x , y ) \)", it does not matter if we interpret it as a genuine satisfaction relation~\cite[Ch.~1, \S 9]{Devlin:1984aa} or as a relativization~\cite[Definition 12.6]{Jech:2003pd}, because in both cases, the complexity of the sentence is at most \( \Delta_1 ( x , y , \delta ) \) over \( \Hh_{ \omega_1 } \). 
Hence our set is \( \Sigma_1 \) over \( \Hh_{\omega_1} \).
\end{proof}

We also need the following theorem.

\begin{theorem}[Mansfield-Solovay, {\cite[Corollary 14.9]{Kanamori:2003zk}}]\label{th:Mansfield}
If \( X \subseteq \R \) is \( \varSigma^1_2 ( c ) \) and \( X \nsubseteq \Ll [ c ] \), then \( X \) contains a nonempty perfect set.
\end{theorem}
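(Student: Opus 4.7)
The plan is to follow the classical Mansfield tree-derivation argument. First, using the Shoenfield representation of \( \varSigma^1_2 ( c ) \) sets, I would express \( X \) as the projection \( p [ T ] \) of a tree \( T \) on \( \omega \times \omega_1 \), with \( T \) absolutely definable from \( c \) and therefore lying in \( \Ll [ c ] \). This transforms the perfect-set problem for \( X \) into a combinatorial dichotomy on the tree \( T \), to be resolved inside \( \Ll [ c ] \).

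Working in \( \Ll [ c ] \), I would then run a transfinite Cantor--Bendixson-style derivation on \( T \): set \( T^{(0)} = T \), \( T^{(\lambda)} = \bigcap_{\alpha < \lambda} T^{(\alpha)} \) at limit stages, and at successor stages
\[
T^{(\alpha + 1)} = \setof{ (s,t) \in T^{(\alpha)} }{ p [ T^{(\alpha)}_{(s,t)} ] \nsubseteq \Ll [ c ] } ,
\]
where \( T_{(s,t)} \) denotes the subtree of extensions of \( (s,t) \). Since \( T \) has cardinality \( \aleph_1 \) in \( \Ll [ c ] \), this descending chain of trees stabilizes at some ordinal \( \alpha^{*} \); set \( T^{*} = T^{(\alpha^{*})} \).

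The dichotomy now reads as follows. If \( T^{*} = \emptyset \), then a transfinite induction tracking which reals are killed at each stage yields \( X = p [ T ] \subseteq \Ll [ c ] \), contradicting the hypothesis. Hence \( T^{*} \neq \emptyset \). By construction, every \( (s,t) \in T^{*} \) satisfies \( p [ T^{*}_{(s,t)} ] \nsubseteq \Ll [ c ] \); but if above \( (s,t) \) all branches in \( T^{*} \) shared their first coordinate, that common real would be definable from \( (s,t) \) and \( T^{*} \) inside \( \Ll [ c ] \) and hence would lie in \( \Ll [ c ] \), a contradiction. So above every \( (s,t) \in T^{*} \) there are two extensions \( (s_0, t_0), (s_1, t_1) \in T^{*} \) with \( s_0, s_1 \in \omega^{<\omega} \) incomparable. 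A standard Cantor scheme indexed by \( 2^{<\omega} \) built from these splittings then yields a continuous injection \( 2^{\omega} \hookrightarrow p [ T^{*} ] \subseteq X \), producing the desired nonempty perfect subset of \( X \).

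The main obstacle is the careful transfinite bookkeeping required in the empty-kernel case: one must verify that no real outside \( \Ll [ c ] \) can escape the derivation. I would handle this contrapositively: given \( x \in X \setminus \Ll [ c ] \) witnessed by \( ( x , y ) \in [ T ] \), one shows by induction on \( \alpha \) that the auxiliary tree \( T^{x, \alpha} = \setof{ t }{ ( x \restriction \lh(t) , t ) \in T^{(\alpha)} } \) retains an infinite branch, so that \( (x, y') \in [T^{(\alpha)}] \) for some \( y' \); the delicate point is the limit stage, where preservation of branches through \( \bigcap_{\alpha < \lambda} T^{x, \alpha} \) requires a König-compactness argument exploiting that each \( T^{x, \alpha} \) is a tree on an ordinal in \( \Ll [ c ] \).
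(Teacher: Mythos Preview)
The paper does not prove this theorem; it is quoted as a black box with a reference to Kanamori, so there is no ``paper's own proof'' to compare against. Your outline is recognisably the Mansfield tree argument, but the particular derivative you chose has a genuine gap.

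The standard Mansfield derivative removes from \( T^{(\alpha)} \) those nodes \( (s,t) \) above which there is no \emph{first-coordinate splitting} in \( T^{(\alpha)} \): a purely combinatorial condition on the tree. Because \( T \in \Ll[c] \) and this condition is decidable inside \( \Ll[c] \), one gets by induction that every \( T^{(\alpha)} \), and hence the kernel \( T^{*} \), lies in \( \Ll[c] \). Your derivative instead removes \( (s,t) \) when \( p[T^{(\alpha)}_{(s,t)}] \subseteq \Ll[c] \). This is a statement about \emph{all} reals of \( \Vv \), and \( \Ll[c] \) cannot decide it (from \( \Ll[c] \)'s point of view the condition is vacuously true, which would make \( T^{(1)} = \emptyset \) immediately; from \( \Vv \)'s point of view the resulting tree need not be in \( \Ll[c] \)). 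But your final step --- ``that common real would be definable from \( (s,t) \) and \( T^{*} \) inside \( \Ll[c] \)'' --- requires precisely \( T^{*} \in \Ll[c] \). Without it there is no contradiction: a node of \( T^{*} \) could have exactly one real \( x \notin \Ll[c] \) above it and you cannot recover \( x \) from data in \( \Ll[c] \). The fix is simply to replace your semantic pruning by the combinatorial splitting condition.

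A smaller point: your worry about the limit stage is misplaced. No K\"onig-type compactness is available for trees on \( \omega_1 \), but none is needed. With the combinatorial derivative one shows directly that for any \( x \notin \Ll[c] \) and any witness \( (x,f) \in [T] \), the \emph{same} branch \( (x,f) \) lies in every \( [T^{(\alpha)}] \): at successors because the node \( (x{\restriction}n, f{\restriction}n) \) first-coordinate-splits (otherwise \( x \) would be definable from \( T^{(\alpha)} \in \Ll[c] \)), and at limits trivially since the condition is just membership of each initial segment in an intersection.
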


Now, we are ready to prove our main theorem. 

\begin{proof}[Proof of Theorem \ref{th:main}]
\ref{th:main-1}\( \Rightarrow \)\ref{th:main-2}. 
For each \( i \le n + 1 \), let \( A_i \) be the set 
\[
\setof{ ( x_0, \dots, x_{ n + 1 } ) \in \R ^{ n + 2 } }{ \textstyle\EXISTS{ j \neq i} ( x_i, x_j \leqc \bigoplus_{ k \neq i , j } x_k \text{ and } \Ll \bigl [ \bigoplus_{ k \neq i , j } x_k \bigr ] \Models x_i \unlhd x_j ) }
\]
where \( \unlhd \) is the canonical \( \Sigma_1 \) well-ordering of \( \Ll [ \bigoplus_{ k \neq i , j } x_k ] \). 
This is a \( \varSigma_2^1 \) definition by Lemma~\ref{lemma:satisf}. 

Next, we show that the \( A_i \)'s cover \( \R ^{ n + 2 } \) and that for each \( i \le n + 1 \) and for each line \( \ell \in \mathcal{L}_i ( \R ^{ n + 2 } ) \), \( \ell \cap A_i \) is countable. 
Pick any \( ( x_0 , \dots , x_{ n + 1 } ) \in \R ^{ n + 2 } \). 
As, by hypothesis, \( \constructibledegrees \) has breadth at most \( n \), it follows that there are distinct \( i , j \le n + 1 \) such that \( x_i, x_j \leqc \bigoplus_{ k \neq i , j } x_k \). 
Now, either \( \Ll [ \bigoplus_{ k \neq i , j } x_k ] \Models x_i \unlhd x_j \), and then, by definition, \( ( x_0 , \dots, x_{ n + 1 } ) \in A_i \), or \( \Ll [ \bigoplus_{ k \neq i , j } x_k ] \Models x_i \rhd x_j \), and then \( ( x_0, \dots, x_{n + 1} ) \in A_j \). 
Thus \( \R ^{ n + 2 } = \bigcup_{ i \le n + 1 } A_i \).

Fix an \( i \le n + 1 \) and an \( n + 1 \)-tuple \( ( x_0, \dots, x_{n} ) \in \R ^{ n + 1 } \). 
By definition, for each \( y \in \R \),
\begin{multline*}
( x_0 , \dots, x_{ i - 1 } , y , x_{i} , \dots, x_{n} ) \in A_i \IFF {}
 \\
\textstyle \EXISTS{ j \le n }\Bigl ( y , x_j \leqc \bigoplus_{ k \neq j } x_k \text{ and } \Ll \bigl [ \bigoplus_{ k \neq j } x_k \bigr ] \Models y \unlhd x_j \Bigr ) .
\end{multline*}
Since the choices of the \( j \)s in the formula above are finite, and each initial segment of \( \unlhd \restriction \R \) is countable, it follows that the set 
\[
 \setof{ y \in \R }{ ( x_0 , \dots, x_{ i - 1 } , y , x_{i}, \dots, x_{n} ) \in A_i }
\] 
is countable.

\smallskip

\ref{th:main-1}\( \Rightarrow \)\ref{th:main-3}.
For each \( i \le n + 2 \) let \( A_i \) be the set 
\begin{multline*}
 \bigl \{ ( x_0, \dots, x_{ n + 2 } ) \in \R ^{ n + 3 } \Mid \EXISTS{ j , l \neq i } ( j \neq l \text{ and } x_i , x_j , x_l \leqc \textstyle \bigoplus_{ k \neq i , j , l } x_k \text{ and} 
 \\
\shoveright{ \textstyle\Ll \bigl [ \bigoplus_{ k \neq i , j , l } x_k \bigr ] \Models ``x_i , x_j \unlhd x_l \text{ and } f ( x_i ) \le f ( x_j ) \text{, where } f \text{ is the }} 
\\
 \unlhd \text{-least bijection between the } \unlhd \text{-predecessors of } x_l \text{ and \( \omega \)''} \bigr \}
\end{multline*}
where, as before, \( \unlhd \) is the canonical \( \Sigma_1 \) well-ordering of \( \Ll [ \oplus_{ k \neq i , j , l } x_k ] \). 
These are \( \varSigma_2^1 \) sets by Lemma~\ref{lemma:satisf}. 

Arguing as in case~\ref{th:main-1}\( \Rightarrow \)\ref{th:main-2}, it follows that \( \R ^{ n + 3 } = \bigcup_{ i \le n + 2 } A_i \) and that for any \( i \le n + 2 \), for any \( ( x_0 , \dots, x_{ n + 1 } ) \in \R ^{ n + 2 } \), the set 
\[
\setof{ y \in \R }{ ( x_0 , \dots , x_{ i - 1 } , y , x_{i}, \dots, x_{ n + 1 } ) \in A_i }
\] 
is finite.

\smallskip

\ref{th:main-2}\( \Rightarrow  \)\ref{th:main-1}.
Towards a contradiction, let \( \set{ \eq{b_0}_{\mathrm{c}} , \dots , \eq{b_n}_{\mathrm{c}} } \) be an irredundant set of size \( n + 1 \), where \( b_0 , \dots , b_{n} \in \R \). (If \( n = 0 \) this reads as: let \( b_0 \in \R \) be such that \( b_0 \not \in \Ll \).)  
Fix an \( i \le n \). 
For \( u_{ i + 1 } , \dots , u_{ n + 1 } \in \Ll [ b_{ i + 1}, \dots, b_{ n } ] \), the set 
\[
X_i ( u_{ i + 1} , \dots , u_{n + 1} ) = \setof{ ( b_0 , \dots , b_{ i - 1 } , y , u_{ i + 1} , \dots , u_{n + 1} ) }{ y \in \R } \cap A_i 
\]
is countable by assumption, and \( \varSigma^1_2 \)-definable with parameters in \( \Ll [ \bigoplus_{ k \neq i } b_k] \). 
A straightforward consequence of Theorem~\ref{th:Mansfield} is that \( X ( u_{ i + 1 } , \dots , u_{ n + 1 } ) \subseteq \Ll [ \bigoplus_{ k \neq i } b_{k} ] \).
As \( b_i \notin \Ll [ \bigoplus_{ k \neq i } b_k ] \), it follows that 
\[
( b_0 , \dots , b_i , u_{ i + 1} , \dots , u_{n + 1} ) \notin A_i .
\]
Therefore,
\begin{equation*}
 \FORALL{ i \le n } \FORALL{ u_{ i + 1} , \dots , u_{n + 1} \in \Ll [ b_{ i + 1 } , \dots , b_n] } \bigl ( ( b_0 , \dots , b_i , u_{ i + 1 } , \dots , u_{ n + 1 } ) \notin A_i \bigr ) .
\end{equation*}
As \( \Ll [ b_{ i + 1 } , \dots , b_n] \subseteq \Ll [ b_i , \dots , b_n ] \) for all \( i \le n \), we have that
\[
 \FORALL{ i \le n } \forall { u_{ i + 1 } , \dots , u_{ n + 1 } \in \Ll [ b_{ i + 1 } , \dots , b_n ] } \bigl ( ( b_0 , \dots , b_i , u_{ i + 1 } , \dots , u_{ n + 1 } ) \notin \textstyle \bigcup_{ k \le i } A_k \bigr ).
\]
In particular, when \( i = n \), we have \( \FORALL{ u_{ n + 1 } \in \Ll } \bigl ( ( b_0 , \dots , b_{ n } , u_{n + 1} ) \notin \bigcup_{k \le n} A_k \bigr ) \). 
Since, by hypothesis the \( A_i \)s cover \( \R ^{ n + 2 } \), it follows that
\[
 \FORALL{ u_{ n + 1 } \in \Ll } \bigl ( ( b_0 , \dots , b_{ n } , u_{ n + 1 } ) \in A_{ n + 1 } \bigr ) .
\]
If \( \omega _1^{\Ll } = \omega _1 \), then this would imply that the line determined by \( ( b_0 , \dots , b_{ n } ) \) intersects \( A_{ n + 1 } \) in an uncountable set, against our assumption.
If \( \omega _1^{\Ll} < \omega _1 \), then there is \( r \), a Cohen real  over \( \Ll \).
Note that \( \omega _1^{\Ll} = \omega _1^{\Ll [ r ] } \) and that, in \( \Ll [ r ] \), the breadth of \( \constructibledegrees \) is infinite. 
By Shoenfield, \( \R ^{n + 2} \cap \Ll [ r ] = \bigcup_{i < n + 2}\bar{A}_i \) where \( \bar{A}_i = A_i \cap \Ll [ r ] \), and for \( i \le n \)
\[
\Ll [ r ] \Models \bar{A}_i \in \varSigma^{1}_{2} \text{ and } \forall \ell \in \mathcal{L}_{i} ( \ell \cap \bar{A}_i \text{ is countable} ). 
\]
Replacing \( \Vv \) with \( \Ll [ r ] \) the argument above can be repeated reaching a contradiction.

\smallskip

\ref{th:main-3}\( \Rightarrow \)\ref{th:main-1}: 
Towards a contradiction, let \( \set{ \eq{b_0}_{\mathrm{c}} , \dots , \eq{b_n}_{\mathrm{c}} } \) be an irredundant set of size \( n + 1 \).
Fix an \( i\le n \). 
Arguing as before, for all \( u_{ i + 1} , \dots , u_{ n + 2 } \in \Ll [ b_{ i + 1} , \dots , b_{n} ] \) the set 
\[
X_i ( u_{ i + 1} , \dots , u_{ n + 2} ) = \setof{ ( b_0 , \dots , b_{ i - 1 } , y , u_{ i + 1} , \dots , u_{ n + 2 } ) }{ y \in \R } \cap A_i 
\]
is finite by hypothesis, and \( \varSigma^1_2 \)-definable with parameters in \( \Ll [ \bigoplus_{ k \neq i } b_{ k } ] \). 
Since \( b_i \notin \Ll [ \bigoplus_{ k \neq i }b_{ k } ] \), it follows that 
\[
 \FORALL{ i \le n } \FORALL{ u_{ i + 1} , \dots , u_{ n + 2 } \in \Ll [ b_{ i + 1 } , \dots , b_n ] } \bigl ( ( b_0 , \dots , b_i , u_{ i + 1 } , \dots , u_{ n + 2 } ) \notin \textstyle \bigcup_{ k \le i } A_k \bigr ) .
\]
In particular, \( \FORALL{ u_{n + 1} , u_{n+2} \in \Ll } \bigl ( ( b_0 , \dots , b_{ n } , u_{n + 1} , u_{ n + 2 } ) \notin \bigcup_{ k \le n } A_k \bigr ) \).
For each \( u_{ n + 2 } \in \R \), the set
\[
 X_{ n + 1 } ( u_{ n + 2 } ) = \setof{ ( b_0 , \dots , b_ {n} , y , u_{ n + 2 } ) }{ y \in \R } \cap A_{ n + 1 }
\] 
is finite by assumption. 
Thus, the set \( \bigcup_{q \in \Q } X_{ n + 1 } ( q ) \) is countable. 
As for the case \ref{th:main-2}\( \Rightarrow \)\ref{th:main-1}, we can restrict ourselves to the case \( \omega_1^{\Ll} = \omega_1 \). 
Therefore, there exists an \( \bar{x} \in \R \cap \Ll \) such that  \( ( b_0, \dots, b_n, \bar{x}, q ) \notin  X_{ n + 1} ( q ) \) for all \( q \in \Q \).
It follows that 
\[
\forall q \in \Q \bigl ( ( b_0 , \dots , b_ {n } , \bar{x} , q ) \notin \textstyle \bigcup_{ k \le n + 1 } A_k \bigr )
\]
and since by hypothesis the \( A_i \)s cover \( \R ^{ n + 3 } \), it follows that 
\[
\FORALL{ q \in \Q } \bigl ( ( b_0 , \dots , b_ {n} , \bar{x} , q ) \in A_{n + 2} \bigr ) ,
\]
but this means that the line determined by \( ( b_0 , \dots , b_ {n} , \bar{x} ) \) intersects \( A_{ n + 2 } \) in an infinite set, against our assumption. 
\end{proof}

There are a couple of remarks about Theorem~\ref{th:main} that we would like to make.
The first is that its proof straightforwardly relativizes to any \( a \in \R \), with \ref{th:main-1} being ``\( \constructibledegrees \) has breadth at most \( n \) above \( \eq{a}_{\mathrm{c}} \)'' and the \( A_i \)s from \ref{th:main-2} and \ref{th:main-3}  being \( \varSigma_2^1 ( a ) \).
The second  remark is that the proof of Theorem~\ref{th:main} still works if  clauses~\ref{th:main-2} and~\ref{th:main-3} are weakened a bit.
For example~\ref{th:main-2} can be weakened to: 

There are \( A_0 , \dots , A_{ n + 1 } \) covering \( \R ^{ n + 2 }  \) such that 
\begin{enumerate}
\item[(b1)]
each \( A_i \) is \( \varSigma_2^1 \), and  \( \FORALL{ \ell \in \mathcal{L}_i ( \R ^{ n + 2 } ) } ( \ell\cap A_i \text{ is thin in } \ell ) \), for all \( i \le n  \),
\item[(b2)]
\(  \FORALL{ \ell \in \mathcal{L}_{ n + 1 } ( \R ^{ n + 2 } ) } ( \ell\cap A_{ n + 1} \text{ is countable} ) \);
\end{enumerate}
and~\ref{th:main-3} can be weakened to:

There are \( A_0, \dots, A_{ n + 2 }  \) covering \( \R ^{ n + 3 }  \) such that
\begin{enumerate}
\item[(c1)]
each \( A_i \) is \( \varSigma_2^1 \), and  \( \FORALL{ \ell \in \mathcal{L}_i ( \R ^{ n + 3 } ) } ( \ell\cap A_i \text{ is thin in } \ell ) \), for all \( i \le n  \),
\item[(c2)]
\( \FORALL{ \ell \in \mathcal{L}_{n + 1 } ( \R ^{ n + 3 } ) } ( \ell\cap A_{n + 1 } \text{ is countable } ) \),
\item[(c3)]
\( \FORALL{ \ell \in \mathcal{L}_{ n + 2 } ( \R ^{ n + 3 } ) } ( \ell\cap A_{ n + 2} \text{ is not dense in } \ell ) \).
\end{enumerate}
In (b2), (c2),  and (c3) we do not require that the sets be \( \varSigma^1_2 \), a fact observed  by Törnquist and Weiss in the context of clouds---see Remark~\ref{rmk:Asger} below.
The finiteness condition in \ref{th:main-3} is weakened in  (c1) by requiring that the intersections be thin, in (c2) to be countable, and in (c3) to be non-dense---this last weakening was first brought to the fore by~\cite{Bagemihl:1961ve}.

\section{Open questions}\label{sec:conclusion}

\subsection{Covering \( \R ^2 \) with \( \varSigma_2^1 \) fogs, clouds, and sprays}\label{subsec:coveringtheplanewithsprays}

There are several results similar to the theorems by Sierpi\'{n}ski and Kuratowski asserting the equivalence between \( 2^{\aleph_0} \leq \aleph_n \) and the possibility of covering the plane with sets having small intersections with prescribed families of geometric objects.

\begin{definition}
A set \( A \subseteq \R ^2 \) is an:
\begin{itemize}
\item
\markdef{\( \aleph_k \)-fog} if for some non-zero vector \( \mathbf{u} \) called the \markdef{direction} of \( A \), each line parallel to \( \mathbf{u} \) intersects \( A \) in a set of size \( < \aleph_k \); if \( k  = 0 \), i.e. the intersections are finite, we call this a \markdef{fog}, if \( k = 1 \), i.e. the intersections are countable, we speak of \( \sigma \)-fog.
\item
 \markdef{\( \aleph_k \)-cloud} if for some point \( \mathbf{p} \in \R ^2 \), called the \markdef{center} of \( A \), each line passing through \( \mathbf{p} \) intersects \( A \) in a set of size \(  < \aleph_k \); if \( k  = 0 \) we call this a \markdef{cloud}, if \( k = 1 \) we speak of \( \sigma \)-clouds.
\item
\markdef{\( \aleph_k \)-spray} if for some point \( \mathbf{p} \in \R ^2 \), called the \markdef{center} of \( A \), each circle with  center \( \mathbf{p} \) intersects \( A \) in a set of size \(  < \aleph_k \); if \( k  = 0 \), we call this a \markdef{spray}, if \( k = 1 \)  we speak of \( \sigma \)-sprays.
\end{itemize}
\end{definition}

Fogs were introduced by R.O. Davies in~\cite{Davies:1963zr} while general \( \aleph_k \)-fogs were studied by F.~Bagemihl in~\cite{Bagemihl:1968up}, but the name ``fog'' is due to A.~Miller.

\begin{theorem}[Bagemihl, Davies]\label{th:fogs}
For every \( n \in \omega \), the following are equivalent:
\begin{enumerate}[label={\upshape (\alph*)}]
\item
\( 2^{\aleph_0} \leq \aleph_n \).
\item\label{th:fogs-1}
\( \R^2 \) can be covered with \( n + 2 \) fogs with pairwise non-parallel directions.
\item\label{th:fogs-2}
\( \R^2 \) can be covered with \( n + 1 \) \( \sigma \)-fogs with pairwise non-parallel directions.
\end{enumerate}
\end{theorem}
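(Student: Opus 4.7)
My plan is to prove the three-way equivalence by the four implications (a)$\Rightarrow$(b), (a)$\Rightarrow$(c), (b)$\Rightarrow$(a), and (c)$\Rightarrow$(a). The constructive directions proceed via a combinatorial coloring built from a well-ordering of $\R$ in order type $\leq \omega_n$; the converse directions use iterated Sierpi\'nski-style avoidance arguments. Note that (b) and (c) do not seem to directly imply each other, so both forward constructions are genuinely needed.

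For the constructions, fix the $m+1$ pairwise non-parallel directions $\mathbf{u}_0, \dots, \mathbf{u}_m$ in $\R^2$ (with $m = n+1$ for case (b) and $m = n$ for case (c)). For each $p \in \R^2$, let $c_i(p) \in \R$ be the parameter of the line through $p$ in direction $\mathbf{u}_i$; this yields an injection $\pi = (c_0, \dots, c_m)\colon \R^2 \hookrightarrow \R^{m+1}$. Via the well-ordering of $\R$, each $c_i(p)$ has an ordinal rank $\rho_i(p) < \omega_n$, and the line through $p$ in direction $\mathbf{u}_i$ is exactly $\{q : \rho_i(q) = \rho_i(p)\}$. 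The plan reduces the geometric problem to a combinatorial lemma: exhibit a coloring $\chi\colon \omega_n^{m+1} \to \{0, \dots, m\}$ such that, for each $i$ and each $\alpha < \omega_n$, the slice $\chi^{-1}(i) \cap \{\vec\beta : \beta_i = \alpha\}$ is finite (case (b)) or countable (case (c)); then $A_i := \{p \in \R^2 : \chi(\rho_0(p), \dots, \rho_m(p)) = i\}$ is the desired covering. For case (c) with $n = 1$, the Sierpi\'nski 1919 assignment ``$\chi(\vec\beta) = $ least $i$ achieving $\max_j \beta_j$'' gives countable slices immediately; for higher $n$, and for case (b), a more delicate coloring built by induction on $n$ via bijections $\omega_n \leftrightarrow \omega_n \times \omega$ and finer sub-max comparisons is required.

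For the converse directions, I would argue by contradiction assuming $|\R| \geq \aleph_{n+1}$ while a covering exists. Pick $Y_0 \subseteq \R$ of size $\aleph_n$. For each $y \in Y_0$, the fiber $\{x \in \R : (x, y) \in A_0\}$ is finite (case (b)) or countable (case (c)), so the union over $Y_0$ has size at most $\aleph_n \cdot \aleph_0 = \aleph_n < \aleph_{n+1} \leq |\R|$; some $x^* \in \R$ avoids it, forcing $(x^*, y) \notin A_0$ for every $y \in Y_0$. Iterating this Sierpi\'nski-style extraction along the remaining directions---at each stage choosing a new coordinate avoiding the accumulated fibers of the next $A_i$, while using pigeonhole to retain a subset of size $\aleph_n$ of the previously constrained points---one eventually produces a point of $\R^2$ in none of the $A_i$, contradicting the covering.

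The main obstacle is the combinatorial coloring lemma for (a)$\Rightarrow$(b): achieving \emph{finite} (not merely countable) coordinate-wise slices with $n+2$ colors on $\omega_n^{n+2}$. The elementary max-rank assignment that yields the $\sigma$-fog construction produces only slices of size $|\alpha|^{n+1}$, which can be $\aleph_{n-1}$ rather than finite. Upgrading to finite slices is Bagemihl's main contribution, and it requires an intricate inductive construction that must orchestrate the refinement simultaneously across all $n+2$ directions. A secondary difficulty lies in the inductive bookkeeping of the converse argument: the iteration of Sierpi\'nski dichotomies through $m+1$ stages must carefully preserve freshness of the avoided constraints, so the order in which directions are processed matters.
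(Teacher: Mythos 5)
The paper does not prove Theorem~\ref{th:fogs}: it is cited from Davies~\cite{Davies:1963zr}, who introduced fogs, and Bagemihl~\cite{Bagemihl:1968up}, who studied general $\aleph_k$-fogs, so there is no internal proof to compare against. Evaluating your proposal on its own, the constructive direction (a)$\Rightarrow$(b) (and (a)$\Rightarrow$(c) for $n\ge 2$) contains a fatal gap: the combinatorial lemma you reduce to is not ``an intricate inductive construction'' waiting to be found---it is false.

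You require a coloring $\chi\colon\omega_n^{m+1}\to\{0,\dots,m\}$ such that, for every $i$ and every $\alpha<\omega_n$, the entire $m$-dimensional hyperplane slice $\chi^{-1}(i)\cap\{\vec\beta:\beta_i=\alpha\}$ is finite (case (b)) or countable (case (c)). This is already impossible for $n=1$ in case (b). Restrict to the slice $\{\beta_0=0\}\cong\omega_1^2$: by hypothesis $\chi^{-1}(0)$ meets it in a finite set $F$, while $\chi^{-1}(1)\cap\{\beta_0=0,\beta_1=\alpha_1\}$ is finite for every $\alpha_1$, and $\chi^{-1}(2)\cap\{\beta_0=0,\beta_2=\alpha_2\}$ is finite for every $\alpha_2$. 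Set $R_\beta=\{\gamma<\omega_1:\chi(0,\beta,\gamma)=1\}$, a finite set for each $\beta$. Every column $\{(\beta,\gamma):\beta<\omega_1\}$ meets $F$ and $\chi^{-1}(2)\cap\{\beta_0=0\}$ only finitely, so each fixed $\gamma$ lies in $R_\beta$ for cofinitely many $\beta$; choosing countably many $\gamma_k$ and a single $\beta^*$ outside the countable union of exceptional sets forces $R_{\beta^*}$ to be infinite, a contradiction. The analogous argument (``finite'' replaced by ``countable'', $\omega_1$ by $\omega_n$) defeats case (c) for every $n\ge 2$. The conceptual error is that the embedding $\pi\colon\R^2\hookrightarrow\R^{m+1}$ maps a line $\ell\in\mathcal{L}_i(\R^2)$ onto a \emph{one-dimensional} curve inside the hyperplane $\{x_i=c_i(\ell)\}$, and the fog condition only constrains $A_i$ along those one-dimensional images; demanding small intersection with the full slice is strictly---and unachievably---stronger. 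The genuine Davies--Bagemihl constructions therefore have to control $A_i\cap\ell$ line by line in $\R^2$ and cannot be factored through a coordinate-hyperplane coloring of $\omega_n^{m+1}$.
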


Each \( A_i \) in~\eqref{eq:Sierpinskigeneral} is an \( \aleph_k \)-fog with direction \( \mathbf{e}_i \), and the main theorem in~\cite{Bagemihl:1968up} generalizes the result by Kuratowski and Sierpi\'{n}ski.
Next, we consider the problem of covering the plane with clouds.

\begin{theorem}[Komjáth, Schmerl]\label{th:KomjathSchmerl}
For every \( n \in \omega \), the following are equivalent:
\begin{enumerate}[label={\upshape (\alph*)}]
\item \label{th:KomjathSchmerl-0}
\( 2^{\aleph_0} \le \aleph_n \).
\item \label{th:KomjathSchmerl-1}
\( \R ^2 \) is covered by \( n + 2 \) clouds with distinct, non-collinear centers. 
\item \label{th:KomjathSchmerl-2}
\( \R ^2 \) is covered by \( n + 1 \) \( \sigma \)-clouds with distinct, non-collinear centers. 
\end{enumerate}
\end{theorem}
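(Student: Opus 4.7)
The plan is to establish the four implications $(a) \Rightarrow (c)$, $(c) \Rightarrow (a)$, $(a) \Rightarrow (b)$, $(b) \Rightarrow (a)$ separately, since the numbers of centers in~(b) and~(c) differ and neither statement immediately implies the other. Throughout, the key idea is to identify the pencil $\mathcal{F}_i$ of lines through a chosen center $\mathbf{p}_i$ with the continuum: each $\mathbf{q} \in \R^2 \setminus \{\mathbf{p}_i\}$ determines a unique line in $\mathcal{F}_i$, giving an ``index'' $\tau_i(\mathbf{q}) < 2^{\aleph_0}$ relative to any fixed enumeration of $\mathcal{F}_i$.

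For the forward directions, suppose $2^{\aleph_0} \le \aleph_n$. For $(a) \Rightarrow (c)$ I would fix $n + 1$ non-collinear centers $\mathbf{p}_0, \dots, \mathbf{p}_n$ and enumerate each pencil as $\langle \ell^i_\gamma : \gamma < \omega_n \rangle$; then I assign each $\mathbf{q}$ to $A_{i(\mathbf{q})}$, where $i(\mathbf{q})$ is the index maximising $\tau_i(\mathbf{q})$, with some canonical tiebreaker. Given $\ell = \ell^i_\gamma$, any $\mathbf{q} \in \ell \cap A_i$ must satisfy $\tau_j(\mathbf{q}) \le \gamma$ for every $j \ne i$, confining $\mathbf{q}$ to one of $\gamma + 1$ prescribed lines through $\mathbf{p}_j$, each meeting $\ell$ in at most one point; hence $|\ell \cap A_i| \le \gamma + 1 < \aleph_n$. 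This naive rule delivers $\sigma$-clouds when $n = 1$, but for $n \ge 2$ it only yields $\aleph_{n-1}$-clouds, and tightening the bound to countability requires an induction on $n$ in which the inductive hypothesis is applied to initial segments of the well-order. The implication $(a) \Rightarrow (b)$ is analogous but uses $n + 2$ centers and a more careful assignment that leverages the extra pencil to drop the section bound from countable to finite; this is Schmerl's construction, whose bookkeeping is reminiscent of the Kuratowski--Sierpi\'nski decomposition in~\eqref{eq:Sierpinskigeneral} with $k = 0$.

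For the converses $(c) \Rightarrow (a)$ and $(b) \Rightarrow (a)$, I would argue by contradiction: assuming a covering as in~(b) or~(c) together with $2^{\aleph_0} \ge \aleph_{n+1}$, I would pick $\aleph_{n+1}$ pairwise distinct lines through one of the centers and exploit the finiteness (respectively countability) of their intersections with the corresponding cloud to produce, via a cardinality and pigeonhole argument, a point of $\R^2$ missed by every $A_i$. The principal obstacle throughout is the gap between the easy ``max index'' rule---which readily yields $\aleph_{n-1}$-clouds---and the finiteness required for genuine clouds (and, for $n \ge 2$, the countability required for $\sigma$-clouds); bridging this gap is where the delicate combinatorial arguments of Komj\'ath and Schmerl really do the work.
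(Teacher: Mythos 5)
The paper does not prove Theorem~\ref{th:KomjathSchmerl} at all: it records the result and, in the paragraph immediately following, attributes \ref{th:KomjathSchmerl-0}\( \IMPLIES \)\ref{th:KomjathSchmerl-1} and the \( n=1 \) converse to Komj\'ath, the general converse \ref{th:KomjathSchmerl-1}\( \IMPLIES \)\ref{th:KomjathSchmerl-0} to Schmerl, and the passage to the \( \sigma \)-cloud version \ref{th:KomjathSchmerl-2} to a theorem of Erd\H{o}s et al. So there is no in-paper proof to compare against; your sketch can only be measured against the cited literature and your own internal coherence.

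Taken on those terms, your sketch is a reasonable map of the landscape but not a proof, and you say so yourself: you acknowledge that the naive ``largest index'' assignment fails for \( n \ge 2 \), that ``tightening the bound to countability requires an induction'' you do not carry out, and that \ref{th:KomjathSchmerl-0}\( \IMPLIES \)\ref{th:KomjathSchmerl-1} ``is Schmerl's construction'' --- which is really Komj\'ath's --- whose bookkeeping you also leave out. Deferring all the genuinely hard combinatorics to the very papers you would be trying to reprove is a gap, not a proof. There is also a small terminological slip: with \( n+1 \) centers and pencils well-ordered in type \( \omega_n \), the max-index rule gives \( \card{\ell \cap A_i} \le \card{\gamma} < \aleph_n \), which in the paper's convention makes each \( A_i \) an \( \aleph_n \)-cloud, not an \( \aleph_{n-1} \)-cloud. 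Finally, you treat \ref{th:KomjathSchmerl-1} and \ref{th:KomjathSchmerl-2} as independent targets requiring four separate implications, whereas the paper's route is more economical: establish \ref{th:KomjathSchmerl-0}\( \IFF \)\ref{th:KomjathSchmerl-1} and then transfer to the \( \sigma \)-version via~\cite[Theorem~2]{Erdos:1994yq}, which lets one trade a finite-intersection condition with \( m+1 \) objects for a countable-intersection condition with \( m \) objects. If you were to actually write this proof out, that transfer lemma is worth invoking rather than redoing the transfinite recursion a second time with a different cardinal budget.
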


The notion of cloud was introduced by  P.~Komjáth who proved in~\cite{Komjath:2001kq} the implication \ref{th:KomjathSchmerl-0}\( \IMPLIES \)\ref{th:KomjathSchmerl-1} for all \( n \), and converse implication for \( n = 1 \), while the general case of \ref{th:KomjathSchmerl-1}\( \IMPLIES \)\ref{th:KomjathSchmerl-0} is from~\cite{Schmerl:2003uq}.
Using~\cite[Theorem 2]{Erdos:1994yq} one  can easily generalize these results to \( \sigma \)-clouds, and obtain the equivalence with \ref{th:KomjathSchmerl-2}.

The plane cannot be covered with finitely many (\( \sigma \)-)clouds with collinear centers, so the non-collinearity assumption is essential.
But this is the only obstacle: for any line \( \ell \) of \( \R^2 \), \( \R ^2 \setminus \ell \) is covered by \( n + 2 \) clouds (or \( n + 1 \) \( \sigma \)-clouds) with distinct centers belonging to \( \ell \) if and only if \( 2^{\aleph_0} \le \aleph_n \). 
From now on, let \( \mathcal{P} \) denote the plane with one line removed, e.g.  \( \R^2 \) minus the \( x \)-axis
\begin{equation*}
 \mathcal{P} \coloneqq \R^2 \setminus \setof{ ( x , 0 ) }{ x \in \R } .
\end{equation*}
\begin{corollary}\label{cor:KomjathSchmerl2}
For every \( n \in \omega \), the following are equivalent:
\begin{enumerate}[label={\upshape (\alph*)}]
\item 
\( 2^{\aleph_0} \le \aleph_n \).
\item 
\( \mathcal{P} \) is covered by \( n + 2 \) clouds with distinct centers. 
\item 
\( \mathcal{P} \) is covered by \( n + 1 \) \( \sigma \)-clouds with distinct centers. 
\end{enumerate}
\end{corollary}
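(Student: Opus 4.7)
My plan is to proceed by case analysis. The forward directions \((a)\Rightarrow(b),(c)\) are immediate from Theorem~\ref{th:KomjathSchmerl}: any cover of \(\R^2\) by \(n+2\) clouds (respectively, \(n+1\) \(\sigma\)-clouds) with non-collinear centers restricts to a cover of \(\mathcal{P}\) by the same number of clouds with (in particular) distinct centers.

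For \((b)\Rightarrow(a)\), I would take a cover of \(\mathcal{P}\) by clouds \(A_0,\dots,A_{n+1}\) with distinct centers \(\mathbf{p}_0,\dots,\mathbf{p}_{n+1}\) and split according to whether these centers are collinear. If they are not, at least one \(\mathbf{p}_j\) lies off the \(x\)-axis; every line through \(\mathbf{p}_j\) meets the \(x\)-axis in at most one point, so \(A_j\cup(x\text{-axis})\) is still a cloud centered at \(\mathbf{p}_j\). Replacing \(A_j\) by this enlargement produces a cover of the whole plane by \(n+2\) clouds with non-collinear centers, and Theorem~\ref{th:KomjathSchmerl} then yields \(2^{\aleph_0}\le\aleph_n\). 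If instead the centers are collinear on a line \(m\), I first observe that \(m\) must coincide with the \(x\)-axis: otherwise \(m\cap\mathcal{P}\) would be uncountable (it consists of \(m\) minus at most one point), whereas \(m\) passes through every center \(\mathbf{p}_i\), forcing each \(A_i\cap m\) to be finite, so \(\bigcup_i(A_i\cap m)\) would be finite, contradicting the cover. Once \(m\) is identified with the \(x\)-axis, the conclusion follows from the observation recorded just above the corollary, namely that a cover of \(\R^2\setminus\ell\) by \(n+2\) clouds with distinct centers on \(\ell\) already implies \(2^{\aleph_0}\le\aleph_n\). The implication \((c)\Rightarrow(a)\) is proved by the same argument, with ``finite'' replaced by ``countable'' throughout.

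The hard part is this last sub-case, where all the \(\mathbf{p}_i\)'s sit on the removed line: the \(x\)-axis-extension trick of the non-collinear case is unavailable, because the \(x\)-axis passes through every center, so absorbing it into any \(A_i\) would destroy the cloud property. The entire burden of the argument therefore rests on the strengthened form of Komjáth--Schmerl recorded in the excerpt just before the corollary, which I am taking as given.
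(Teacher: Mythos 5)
Your proof is correct and fills in the argument the paper leaves implicit. The case split (non-collinear centers, in which case you absorb the removed line into a cloud centered off it and invoke Theorem~\ref{th:KomjathSchmerl}; versus collinear centers, in which case you show the common line must be the removed line and invoke the statement recorded just before the corollary) is exactly the intended reduction, and your counting argument for ruling out a collinear axis other than the removed line is the same observation that underlies the paper's remark that the plane cannot be covered by finitely many clouds with collinear centers.
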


Next, we consider the problem of covering the plane with sprays.

\begin{theorem}[de la Vega, Schmerl]\label{th:DelaVegaSchmerl}
For every \( n \in \omega \), the following are equivalent:
\begin{enumerate}[label={\upshape (\alph*)}]
\item\label{th:DelaVegaSchmerl-0}
\( 2^{\aleph_0} \leq \aleph_n \).
\item\label{th:DelaVegaSchmerl-1}
\( \R^2 \) can be covered with \( n + 2 \) sprays with distinct, collinear centers.
\item\label{th:DelaVegaSchmerl-2}
\( \R^2 \) can be covered with \( n + 1 \) \( \sigma \)-sprays with distinct, collinear centers.
\end{enumerate}
\end{theorem}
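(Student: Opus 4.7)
The plan is to follow the template of the Kuratowski--Sierpi\'nski transfinite-induction proof of~\eqref{eq:Sierpinskigeneral}, with circles centered at fixed points playing the role that parallel lines played there. Fix once and for all $n+2$ distinct collinear centers $\mathbf{p}_0,\dots,\mathbf{p}_{n+1}$ on a line $\ell$. The key geometric input is that two distinct points $\mathbf{x}\neq\mathbf{y}$ lie on a common circle about $\mathbf{p}_i$ exactly when $\mathbf{p}_i$ is on the perpendicular bisector of $\mathbf{x}\mathbf{y}$; since the centers are collinear, this happens for at most one index $i$, unless $\mathbf{y}$ is the reflection $\mathbf{x}'$ of $\mathbf{x}$ across $\ell$, in which case it holds for all $i$. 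This is the spray-analog of the fact that two distinct points lie on a common line in direction $\mathbf{e}_i$ for at most one $i$, which drove the classical line-based proof.

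For (a)$\Rightarrow$(b) and (a)$\Rightarrow$(c) I would well-order $\R^2$ in order type $\leq\aleph_n$ and, at stage $\alpha$, assign $\mathbf{x}_\alpha$ to some $A_i$ chosen so that the circle through $\mathbf{x}_\alpha$ around $\mathbf{p}_i$ contains only finitely (resp.~countably) many previously placed points of $A_i$; by the observation above each prior $\mathbf{x}_\beta\neq\mathbf{x}_\alpha,\mathbf{x}'_\alpha$ obstructs at most one index, so a bookkeeping argument of the shape underlying Theorems~\ref{th:fogs} and~\ref{th:KomjathSchmerl} produces a safe $i$ whenever $\alpha<\aleph_n$. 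A conceptually cleaner route to (a)$\Leftrightarrow$(b) goes via Bagemihl--Davies: the map $\phi\colon\mathbf{x}\mapsto\bigl(d(\mathbf{x},\mathbf{p}_0)^2,d(\mathbf{x},\mathbf{p}_1)^2\bigr)$ sends each level set of $d(\cdot,\mathbf{p}_i)$ to an affine line in the target plane, and collinearity of the $\mathbf{p}_i$ forces the $n+2$ resulting directions to be pairwise non-parallel precisely because the $\mathbf{p}_i$ are pairwise distinct; $\phi$ is $2$-to-$1$ (collapsing by reflection across $\ell$), but this preserves finiteness of circle intersections, so spray-coverings on $\R^2$ correspond to fog-coverings on the image, and one invokes Theorem~\ref{th:fogs}.

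The converses (b)$\Rightarrow$(a) and (c)$\Rightarrow$(a) I would handle by the usual contradiction: assuming $\card{\R}>\aleph_n$ together with a hypothesised covering, one inductively selects avoidance points by exploiting the smallness of the circle-spray intersections, producing an $\mathbf{x}\in\R^2\setminus\bigcup_i A_i$. The auxiliary step (b)$\Rightarrow$(c) is routine: two of the $n+2$ sprays can be merged into a single $\sigma$-spray at a suitable new center on $\ell$, since each circle around the new center meets each circle around an old center in at most two points. I expect the main obstacle to be the \emph{finite} intersection requirement in (b); unlike in the countable case, finite-intersection is not preserved under arbitrary unions, so the transfinite induction must choose $i$ with tighter control at each stage, and it is precisely here that the collinearity of the centers is essential---without it the $n+2$ circle foliations would fail to be pairwise transverse except on a single reflection pair, and the counting argument would collapse.
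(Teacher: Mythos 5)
The paper itself does not prove Theorem~\ref{th:DelaVegaSchmerl}; it records the history, attributing the construction direction to Schmerl (2003), the general implication \ref{th:DelaVegaSchmerl-1}\( \IMPLIES \)\ref{th:DelaVegaSchmerl-0} to Schmerl (2010) with de la Vega having settled \( n=1 \), and deriving the \( \sigma \)-spray version~\ref{th:DelaVegaSchmerl-2} from a theorem of Erd\H{o}s--Jackson--Mauldin. So your proposal cannot be matched against an in-paper argument and has to stand on its own. Your map \( \phi(\mathbf{x}) = \bigl(d(\mathbf{x},\mathbf{p}_0)^2, d(\mathbf{x},\mathbf{p}_1)^2\bigr) \) is the right idea and is in the algebraic spirit of Schmerl's treatment of collinear sprays: writing \( \mathbf{p}_i=(a_i,0) \), one has \( d(\mathbf{x},\mathbf{p}_i)^2 = \bigl(1-\tfrac{a_i-a_0}{a_1-a_0}\bigr)d(\mathbf{x},\mathbf{p}_0)^2 + \tfrac{a_i-a_0}{a_1-a_0}\,d(\mathbf{x},\mathbf{p}_1)^2 + \text{const}_i \), so each circle foliation is carried to a line foliation in the \( (s,t) \)-plane, the \( n+2 \) directions are pairwise non-parallel exactly because the \( a_i \) are distinct, and the 2-to-1 reflection collapse only doubles intersection sizes. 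You should, however, be explicit that \( \phi(\R^2) \) is a proper region of the target plane (cut out by the discriminant inequality \( y^2\ge 0 \)), so invoking Theorem~\ref{th:fogs} in the converse direction requires either checking that the Bagemihl--Davies argument works on that region, or working directly; this is the genuinely nontrivial content of the Schmerl (2010) step and should not be waved at as ``the usual contradiction.''

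The concrete error is the claim that (b)\( \Rightarrow \)(c) is ``routine'' because two of the \( n+2 \) sprays ``can be merged into a single \( \sigma \)-spray at a suitable new center,'' justified by the fact that a circle around the new center meets each circle around an old center in at most two points. That observation is about a single pair of circles, but a circle around the new center is swept out by a \emph{continuum} of circles around the old center, and the two-point bound gives no control over their union. Concretely, take \( \mathbf{p}_0=(-1,0) \), a prospective new center \( \mathbf{p}'=(0,0) \), let \( C' \) be the unit circle, and let \( A_0=\setof{(x,y)\in C'}{y>0} \). Each circle around \( \mathbf{p}_0 \) meets \( C' \) in a reflection-symmetric pair, hence meets \( A_0 \) in at most one point, so \( A_0 \) is a spray (indeed a ``\( 1 \)-spray'') centered at \( \mathbf{p}_0 \); yet \( A_0\cap C'=A_0 \) is uncountable, so \( A_0 \) is not even a \( \sigma \)-spray around \( \mathbf{p}' \). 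Being a spray at one center carries no information about spray-ness at another, and no local merging can work. The implication must pass through (a): prove (b)\( \Rightarrow \)(a) (your \( \phi \)-reduction is the right tool there) and then (a)\( \Rightarrow \)(c) separately, e.g.\ again by pulling back a covering from Theorem~\ref{th:fogs}\ref{th:fogs-2} along \( \phi \).
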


J. H.~Schmerl introduced the notion of spray in~\cite{Schmerl:2003uq}, where he observed that \( 2^{\aleph_0} \leq \aleph_n \) implies that \( \R^2 \) can be covered with \( n + 2 \) sprays with distinct centers (and no restriction on collinearity).
R.~de la Vega in~\cite{Vega:2009xy} showed that \( \CH \) follows from the plane being covered with three sprays with collinear centers.
Moreover, in that paper, it is proved in \( \ZFC \) that the plane is the union of three sprays whose centers form an equilateral triangle, while in~\cite{Schmerl:2010nr} this was generalized to all triangles.
(This should be contrasted with Corollary~\ref{cor:KomjathSchmerl2}.)
The implication \ref{th:DelaVegaSchmerl-1}\( \IMPLIES \)\ref{th:DelaVegaSchmerl-0} was established in~\cite{Schmerl:2010nr}.
As for Theorem~\ref{th:KomjathSchmerl} the extension to \( \sigma \)-sprays follows easily from~\cite[Theorem 2]{Erdos:1994yq}.

It can be shown that if \( \R^2 \) (or \( \mathcal{P} \)) is covered with \( n + 2 \) \( \aleph_k \)-fogs (or \( \aleph_k \)-clouds, or \( \aleph_k \)-sprays) then there is a Sierpi\'{n}ski decomposition  \( A_0 , \dots , A_{n + 1} \) of \( \R^{ n + 2 } \) as in~\eqref{eq:Sierpinskigeneral}.
Moreover, if the \( \aleph_k \)-fogs/clouds/sprays belong to a pointclass \( \bGamma \supseteq \bDelta^{1}_{1} \), then the \( A_i \)s belong to \( \bGamma \) as well.

\begin{theorem}[Törnquist, Weiss]\label{th:TornquistWeiss}
The following are equivalent.
\begin{enumerate}[label={\upshape (\alph*)}]
\item\label{th:TornquistWeiss-a}
\( \R \subseteq \Ll \).
\item\label{th:TornquistWeiss-b}
\( \R^2 \) can be covered by three \( \varSigma_2^1 \) clouds with constructible, non-collinear centers.
\item
\( \mathcal{P} \) can be covered by three \( \varSigma_2^1 \) clouds with constructible, distinct centers.
\item
\( \mathcal{P} \) can be covered by two \( \varSigma_2^1 \) \( \sigma \)-clouds with constructible, distinct centers.
\end{enumerate}
\end{theorem}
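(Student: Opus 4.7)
The plan is to cycle (a)~$\Rightarrow$~(b)~$\Rightarrow$~(c)~$\Rightarrow$~(a) and, separately, to establish (a)~$\Rightarrow$~(d)~$\Rightarrow$~(a). Two ingredients are already in place. First, the statement (a) is equivalent to ``$\constructibledegrees$ has breadth at most~$0$'', so the $n = 0$ case of Theorem~\ref{th:main} provides a bridge from $\varSigma_2^1$ Sierpi\'{n}ski decompositions back to~(a). Second, the remark preceding the theorem converts coverings of $\R^2$ or $\mathcal{P}$ by $\varSigma_2^1$ clouds (respectively $\sigma$-clouds) into $\varSigma_2^1$ Sierpi\'{n}ski decompositions of the appropriate $\R^{n+2}$ with the same intersection constraints along axis-parallel lines.

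The reverse implications are easy. The step (b)~$\Rightarrow$~(c) is immediate, since non-collinear centers are in particular distinct and any covering of $\R^2 \supseteq \mathcal{P}$ restricts. For (c)~$\Rightarrow$~(a), the cloud-to-Sierpi\'{n}ski conversion produces three $\varSigma_2^1$ sets in $\R^3$ whose intersection with every axis-parallel line is finite; the $n = 0$ case of Theorem~\ref{th:main}\ref{th:main-3} then forces $\constructibledegrees$ to have breadth at most $0$, giving (a). The implication (d)~$\Rightarrow$~(a) is analogous: the conversion yields two $\varSigma_2^1$ sets in $\R^2$ with countable axis-parallel intersections, and the $n = 0$ case of Theorem~\ref{th:main}\ref{th:main-2} applies.

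For the forward directions I would use the fact that under (a) we have $\Hh_{\omega_1} = \Ll_{\omega_1}$, and the canonical well-ordering $\unlhd$ of $\R$ is $\Sigma_1$-definable over $\Hh_{\omega_1}$ with order type $\omega_1$. The implication (a)~$\Rightarrow$~(d) admits a clean direct construction: choose two distinct constructible centers $\mathbf{p}_0, \mathbf{p}_1 \in \R^2$, and for $\mathbf{q} \in \mathcal{P}$ let $\theta_i(\mathbf{q}) \in \R$ be a real encoding (in a fixed constructible way) the line through $\mathbf{p}_i$ and $\mathbf{q}$; then
\[
A_i = \setof{ \mathbf{q} \in \mathcal{P} }{ \theta_{1-i}(\mathbf{q}) \unlhd \theta_i(\mathbf{q}) } \qquad (i = 0, 1)
\]
cover $\mathcal{P}$ because $\unlhd$ is a linear order on $\R$, are $\varSigma_2^1$ by Lemma~\ref{lemma:satisf}, and meet each line through $\mathbf{p}_i$ (on which $\theta_i$ is constant) in bijection with the countable $\unlhd$-initial segment at or below $\theta_i$.

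The main obstacle is the implication (a)~$\Rightarrow$~(b), which requires three $\varSigma_2^1$ clouds with non-collinear constructible centers and \emph{finite} rather than merely countable line intersections. My approach is to perform the classical Komj\'{a}th--Schmerl recursion inside $\Ll$: under (a) we have CH and $\R^2$ is well-ordered by $\unlhd$ in type $\omega_1$, so the $\omega_1$-recursion that at stage $\alpha$ assigns the $\alpha$-th real $\mathbf{q}_\alpha$ to a cloud $A_i$ for which the line through $\mathbf{p}_i \mathbf{q}_\alpha$ has so far accumulated only finitely many points is internal to $\Ll$ and definable from $\unlhd$ alone. The resulting three sets are $\Delta_1$-definable over $\Ll_{\omega_1}$ and hence $\varSigma_2^1$ by Lemma~\ref{lemma:satisf}. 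The technical verification that the recursion terminates at every stage -- equivalently, that the three sets really do cover $\R^2$ with finite line intersections -- is the classical combinatorial core of Komj\'{a}th--Schmerl, and closely parallels the finiteness bookkeeping in the proof of Theorem~\ref{th:main}\ref{th:main-1}~$\Rightarrow$~\ref{th:main-3}.
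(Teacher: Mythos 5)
The paper itself does not prove Theorem~\ref{th:TornquistWeiss}: Remark~\ref{rmk:Asger} explains that T\"ornquist and Weiss established \ref{th:TornquistWeiss-a}\(\IFF\)\ref{th:TornquistWeiss-b} in their paper, and the two remaining equivalences are simply asserted to follow ``similarly.'' So there is no in-paper argument to compare against, and I am assessing your proposal on its own merits. Your treatment of the backward directions is sound: converting a cloud (resp.\ \(\sigma\)-cloud) covering of \(\R^2\) or \(\mathcal{P}\) into a Sierpi\'nski decomposition of \(\R^3\) (resp.\ \(\R^2\)) as sketched in the paragraph preceding the theorem, and then applying the \(n=0\) case of Theorem~\ref{th:main}, is exactly the right route. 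Your direct construction for \ref{th:TornquistWeiss-a}\(\Rightarrow\)(d) is also essentially correct, but you must take \(\mathbf{p}_0,\mathbf{p}_1\) \emph{on the removed line}: otherwise the line \(\mathbf{p}_0\mathbf{p}_1\) meets \(\mathcal{P}\) in a set of size continuum, both \(\theta_0\) and \(\theta_1\) are constant on it, and one of your two sets meets a line through its own center in a continuum-sized set, so it is not a \(\sigma\)-cloud.

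There are, however, two genuine gaps. First, the claim that (b)\(\Rightarrow\)(c) is ``immediate by restriction'' is wrong under the reading of (c) that the paper clearly intends (see the discussion preceding Corollary~\ref{cor:KomjathSchmerl2}): the centers in (c) and (d) are supposed to lie on the removed line, whereas restricting the covering in (b) to \(\mathcal{P}\) keeps its three \emph{non-collinear} centers, which cannot all be on the \(x\)-axis. So (b)\(\Rightarrow\)(c) needs a real argument (e.g.\ a direct construction from (a) with collinear constructible centers on the removed line). Second, and more seriously, \ref{th:TornquistWeiss-a}\(\Rightarrow\)\ref{th:TornquistWeiss-b} is left as a wave at ``perform the Komj\'ath--Schmerl recursion inside \(\Ll\).'' The parallel you draw with the finiteness bookkeeping in Theorem~\ref{th:main}\ref{th:main-1}\(\Rightarrow\)\ref{th:main-3} does not transfer: in the Sierpi\'nski setting, fixing an \(\mathbf{e}_i\)-line fixes all but one of the coordinate reals, so the constraint \(f_{x_l}(y)\le f_{x_l}(x_j)\) with \(x_j,x_l\) \emph{frozen} immediately gives a finite bound. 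For clouds, fixing a line \(\lambda\) through \(\mathbf{p}_0\) only freezes \(\theta_0\), while \(\theta_1(\mathbf{q})\) and \(\theta_2(\mathbf{q})\) both vary (injectively and jointly) as \(\mathbf{q}\) runs over \(\lambda\); the bijection in the constraint then changes with \(\mathbf{q}\), so the same formula gives no finiteness bound. Overcoming precisely this asymmetry is the combinatorial core of Komj\'ath's theorem, and showing that whatever argument one uses is uniformly \(\Sigma_1\) over \(\Hh_{\omega_1}\) (so that Lemma~\ref{lemma:satisf}, or rather \cite[Lemma 25.25]{Jech:2003pd}, applies) is the content of the T\"ornquist--Weiss proof; asserting that the recursion is \(\Delta_1\)-definable without exhibiting it is not a proof. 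Until that step is filled in concretely, \ref{th:TornquistWeiss-a}\(\Rightarrow\)\ref{th:TornquistWeiss-b} remains open in your write-up.
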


\begin{remark} \label{rmk:Asger}
Törnquist and Weiss proved only the equivalence \ref{th:TornquistWeiss-a}\( \IFF \)\ref{th:TornquistWeiss-b} in~\cite{Tornquist:2015ys}, but the other two equivalences can be proved similarly.
In fact, they showed that \( \R \subseteq \Ll \) if and only if the plane can be covered  by three clouds with centers in \( \Ll \), one of which is \( \varSigma_2^1 \).
\end{remark}

In light of Theorem~\ref{th:main}, it is natural to ask:

\begin{question}\label{q:fogs-clouds-sprays}
Let \( n > 0 \).
Does ``\( \constructibledegrees \) has breadth at most \( n \)'' implies (and hence: is equivalent to) any of the following?
\begin{enumerate}[label={\upshape (\alph*)}]
\item\label{q:fogs-clouds-sprays-1}
\( \R^2 \) can be covered by \( n + 3 \) \( \varSigma_2^1 \) fogs with distinct, pairwise non-parallel directions.
\item\label{q:fogs-clouds-sprays-2}
\( \R^2 \) can be covered by \( n + 3 \) \( \varSigma_2^1 \) clouds with distinct, constructible, non-collinear  centers.
\item\label{q:fogs-clouds-sprays-3}
\( \R^2 \) can be covered by \( n + 3 \) \( \varSigma_2^1 \) sprays with distinct, constructible, collinear  centers.
\end{enumerate}
\end{question}
The list in Question~\ref{q:fogs-clouds-sprays} can be further expanded by considering the \( \sigma \)-versions of the objects, so that~\ref{q:fogs-clouds-sprays-1} becomes ``\( \R^2 \) can be covered by \( n + 2 \) \( \varSigma_2^1 \) \( \sigma \)-fogs with distinct, pairwise non-parallel directions'', and similarly for \ref{q:fogs-clouds-sprays-2} and \ref{q:fogs-clouds-sprays-3}.
Moreover for~\ref{q:fogs-clouds-sprays-2} one can also consider replacing \( \R^2 \) with \( \mathcal{P} \) and dropping the non-collinearity assumption.

%
%

\subsection{Large continuum and small breadth of constructibility degrees}\label{sec:largesacks}

It is still open whether the cardinal bound of Ditor's Theorem~\ref{th:Ditor}\ref{th:Ditor-1} is sharp also when either \( n > 2 \) and \( \alpha = 0 \) or \( n > 1 \) and \( \aleph_\alpha \) is singular. 
We refer the reader to~\cite{Wehrung:2010aa} for some positive results and more discussion. 

On top of this, we do not know whether the cardinal bound of Theorem~\ref{th:Ditor}\ref{th:Ditor-1} is optimal for the upper semi-lattice of constructibility degrees when its breadth is bigger than \( 1 \). 
As said in Section~\ref{sec:constructibility}, the iterated perfect set model witnesses the optimality of the cardinal bound for the constructibility degrees when \( n = 1 \). 
However, this is all we know. 
Hence the following question (see Remark~\ref{rmk:breadth}).

\begin{question}\label{q:breadth1}
Is it consistent relative to \( \ZF \) that \( 2^{\aleph_0} = \aleph_3 \) and that the breadth of \( \constructibledegrees \) is \( 2 \)?
\end{question}
A seemingly easier question in this direction is the following.
\begin{question}
Is \( 2^{\aleph_0} = \aleph_3 + \eqref{eq:mincont}\) consistent relative to \( \ZF \)?
\end{question}

\subsection{Covering \( \R ^3 \) with constructible continuous functions}
Let \( X \) be a set, \( n \ge 1 \) and \( f \colon X^n \to X \). 
We say that a point \( ( x_0, \dots, x_n ) \in X^{ n + 1 } \) is \emph{covered} by \( f \) if there is a permutation \( \pi \) on \( n + 1 \) such that 
\[
f \big ( x_{ \pi ( 0 ) } , \dots, x_{ \pi ( n - 1 ) }\big) = x_{ \pi ( n ) }.
\] 
A family \( \mathcal{F} \) of functions from \( X^n \) to \( X \) covers \( A \subseteq X^{ n + 1 } \) if every point of \( A \) is covered by some member of \( \mathcal{F} \).

Abraham and Geschke~\cite{Abraham:2004aa} have shown that, for each \( n \ge 2 \), it is consistent with \( \ZFC \) that \( 2^{\aleph_0} = \aleph_n \) and that \( \R ^{n} \) is covered by an \( \aleph_1 \) subset of \( C ( \R^{n - 1 } ) \).
In the iterated perfect set model (in which \( 2^{\aleph_0} = \aleph_2 \)) the following stronger property holds~\cite{Hart}, \cite[Theorem 73]{Geschke:2004aa}: \( \R ^2 \) is covered by \( C ( \R ) \cap \Ll \), where \( C ( \R ) \cap \Ll \) is the set of all continuous real functions coded in \( \Ll \).

It is easy to see that, for every \( n > 0 \), if \( \R ^{n + 1} \) is covered by \( C ( \R ^n ) \cap \Ll \), then \( \constructibledegrees \) has breadth at most \( n \): indeed, for every \( n + 1 \) reals \( ( x_0, \dots, x_n ) \in \R^{n+1}\), there would be a constructibly coded continuous function \( f \colon  \R^n \to \R \) and a permutation \( \pi \) on \( n + 1 \) such that \( x_{\pi(n)} = f ( x_{ \pi ( 0 ) } , \dots, x_{ \pi ( n - 1 ) } ) \), and thus \( x_{ \pi ( n ) } \) would be constructible relative to \( \bigoplus_{i \neq \pi ( n ) } x_i \).
Therefore, our next and last question is a more demanding version of Question~\ref{q:breadth1}, and a positive answer would yield a strengthening of Abraham and Geschke's result (at least for \( n = 3 \)).

\begin{question}\label{q:breadth2}
Is it consistent relative to \( \ZFC \) that \( 2^{\aleph_0} = \aleph_3 \) and \( \R ^3 \) is covered by \( C( \R ^2 ) \cap \Ll \)?
\end{question}

\printbibliography

\end{document}